\documentclass[11pt,a4paper]{amsart}
\usepackage{verbatim}
\usepackage{setspace}
\usepackage[left=2.5cm,right=2.5cm,top=3.0cm,bottom=3.0cm,includeheadfoot]{geometry}
\usepackage{latexsym}
\usepackage{enumitem}
\usepackage[font=small,labelfont=bf]{caption}
\usepackage{pgfplots}
\usepackage{float}

\usepackage[utf8]{inputenc}
\usepackage[T1]{fontenc}
\usepackage[english]{babel}
\usepackage{csquotes} 

\usepackage{amsmath, amsthm, amssymb}
\usepackage{ytableau}      
\usepackage{ragged2e}
\usepackage{multicol}
\usepackage[rgb]{xcolor}

\usepackage{graphicx, rotating, float}
\usepackage{tikz}
\usetikzlibrary{babel}
\usetikzlibrary{arrows, bending, positioning, chains}
\usetikzlibrary{automata, shapes.geometric}
\usetikzlibrary{calc, arrows.meta}

\tikzset{
	myedge/.style={->, -{Latex[#1]}}
}

\newcommand{\R}{\mathbb{R}}
\newcommand{\Z}{\mathbb{Z}}

\theoremstyle{plain}
\newtheorem{theorem}{Theorem}[section]
\newtheorem{lemma}[theorem]{Lemma}
\newtheorem{proposition}[theorem]{Proposition}
\newtheorem{corollary}[theorem]{Corollary}

\theoremstyle{definition}
\newtheorem{definition}[theorem]{Definition}
\newtheorem{example}[theorem]{Example}

\theoremstyle{remark}
\newtheorem{remark}[theorem]{Remark}

\usepackage{xcolor}
\definecolor{darkblue}{RGB}{0,0,139}

\usepackage[hidelinks]{hyperref}
\hypersetup{
	colorlinks=true,
	linkcolor=darkblue,
	citecolor=darkblue,
	urlcolor=darkblue,
	pdftitle={Weyl groups and the Modified Kostant Game},
	pdfauthor={Alexander Caviedes Castro, Juan Sebastián Cortés-Cruz}
}

\title{Weyl Groups and the Modified Kostant Game}

\author[Alexander Caviedes Castro]{Alexander Caviedes Castro}
\address{Department of Mathematics, Universidad Nacional de Colombia}
\email{acaviedesc@unal.edu.co}

\author[Juan Sebastián Cortés-Cruz]{Juan Sebastián Cortés-Cruz}
\address{Department of Mathematics, Universidad Nacional de Colombia}
\email{juascortescru@unal.edu.co}

\date{\today}

\begin{document}
	
	\maketitle
	
\begin{abstract}
This paper presents a generalization of the Kostant game, a combinatorial framework originally for generating positive roots in Lie algebras. By introducing an arbitrary multi-vertex modification, we prove that the resulting game configurations naturally biject with the minimal length representatives of parabolic quotients $W/W_J$. This yields a dynamical and algorithmic perspective on reduced words. Finally, we apply this framework to derive a novel root counting identity, formalize the Coxeter-theoretic foundation for combinatorial approaches to the Mukai conjecture, establish the regularity of reduced word languages via finite state automata, and dynamically construct Standard Young Tableaux.
\end{abstract}

\section{Introduction}

The classification of complex semisimple Lie algebras relies fundamentally on the combinatorial and geometric properties of root systems and their associated Weyl groups. While the static properties of these structures—encoded in Dynkin diagrams and Cartan matrices—are classical and thoroughly understood \cite{Humphreys1972}, exploring dynamical or algorithmic methods to generate and interact with these structures remains an active area of research in algebraic combinatorics. 

One such dynamical approach is the \textit{Kostant game}, a chip-firing process played on the nodes of a Dynkin diagram. Originally conceived as a combinatorial mechanism to generate the positive roots of a Lie algebra, the classical Kostant game highlights the deeply algorithmic nature of root systems. It provides a visual and discrete step-by-step method to understand how simple roots combine, culminating in the highest root.

In this paper, we establish a novel combinatorial framework at the intersection of Lie theory and algebraic combinatorics by introducing an arbitrary \textbf{multi-vertex generalization of the Kostant game}. Building upon the single-vertex modified game recently utilized to study symplectic flag varieties \cite{CaviedesCastro2022}, our Modified game allows for the simultaneous modification (or activation) of any subset of vertices in the Dynkin diagram. 

The central contribution of this work is the proof that the valid sequences of moves in this Modified game are in natural bijection with the elements of the quotient $W/W_J$, where $W$ is the Weyl group and $W_J$ is the parabolic subgroup determined by the unmodified vertices. We rigorously demonstrate that the game's valid firing sequences correspond precisely to the reduced expressions of the minimal length coset representatives, denoted as $W^J$.

This bijection is not merely of theoretical interest; it serves as a powerful computational and combinatorial tool. First, we demonstrate how this formalism provides the underlying Coxeter-theoretic foundation for geometric problems, specifically explaining the combinatorial algorithms used to compute roots of generalized Hilbert polynomials in the context of the Mukai conjecture \cite{CaviedesCastro2022}. Furthermore, we show its independent utility by proving a novel root counting identity, establishing the regularity of reduced word languages in $W^J$ via finite state automata, and dynamically constructing Standard Young Tableaux. By shifting the perspective from static algebra to dynamical systems, this Modified Kostant Game offers a tangible, algorithmic lens through which the complex geometry of flag varieties and Coxeter groups can be analyzed.

\section{Preliminaries: Root Systems and Parabolic Quotients}
\label{sec:prelim}

We briefly review the foundational concepts of root systems and Weyl groups, focusing on the structures strictly necessary for our results. We assume the reader is familiar with the standard theory of semisimple Lie algebras (see, e.g., \cite{Humphreys1972, BjornerBrenti2005}).


\subsection{Root Systems and Weyl Groups}

Let $E$ be a finite-dimensional Euclidean space endowed with a positive-definite inner product $\langle \cdot, \cdot\rangle$. Let $\Phi \subset E$ be a reduced crystallographic root system, and choose a base of simple roots $\Delta = \{\alpha_1, \dots, \alpha_n\}$. The set of positive roots is denoted by $\Phi^+$. 

For each root $\alpha \in \Phi$, the associated coroot is $\alpha^\vee = \frac{2\alpha}{\langle\alpha, \alpha\rangle}$. The \textit{Cartan matrix} $A = (a_{ij})$ of $\Phi$ is given by the integers $a_{ij} = \langle \alpha_i, \alpha_j^\vee \rangle$.

The combinatorial structure of $\Phi$ is elegantly encoded in its \textbf{Dynkin diagram}. This is a directed multigraph constructed as follows: we assign a vertex to each simple root $\alpha_i \in \Delta$. For any pair of distinct vertices $i$ and $j$, the number of edges connecting them is given by the product $a_{ij}a_{ji} \in \{0, 1, 2, 3\}$. When two vertices are connected by multiple edges, which occurs if and only if the corresponding simple roots have different lengths, an arrow is drawn pointing from the vertex associated with the longer root towards the one associated with the shorter root. The Dynkin diagram completely determines the Cartan matrix, and consequently, the isomorphism class of the root system and the Coxeter presentation of the Weyl group.

The \textbf{Weyl group} $W$ associated with $\Phi$ is the finite reflection group generated by the set of simple reflections $S = \{s_1, \dots, s_n\}$, where the action of $s_i$ on $E$ is defined by:
\begin{equation}
    s_i(\lambda) = \lambda - \langle \lambda, \alpha_i^\vee \rangle \alpha_i \quad \text{for } \lambda \in E.
\end{equation}
The pair $(W, S)$ forms a Coxeter system. The \textit{length function} $\ell(w)$ for an element $w \in W$ is defined as the minimum integer $k$ such that $w$ can be expressed as a product of $k$ simple reflections, $w = s_{i_1} s_{i_2} \dots s_{i_k}$. Such an expression is called a \textit{reduced word}. 

To connect this algebraic definition with the geometry of the root system, we define the \textit{inversion set} of $w$ as the set of positive roots that are sent to negative roots by the action of $w$:
\begin{equation}
    \mathcal{I}(w) := \Phi^+ \cap w^{-1}(-\Phi^+).
\end{equation}
It is a classical and fundamental result that the length of an element is exactly the cardinality of its inversion set \cite[(1.5)]{Humphreys1972}:
\begin{equation}
    \ell(w) = |\mathcal{I}(w)|.
\end{equation}


\subsection{Parabolic Subgroups and Quotients}

Let $J \subseteq S$ be a subset of the simple reflections. The subgroup $W_J$ generated by $J$ is called a \textbf{parabolic subgroup} of $W$. The pair $(W_J, J)$ is itself a Coxeter system.

In the study of flag varieties and algebraic combinatorics, the left cosets $W/W_J$ play a central role. Each coset $wW_J$ contains a unique element of minimal length, which serves as a canonical representative.

\begin{definition}[Minimal Coset Representatives]
	The set of minimal length representatives for the quotient $W/W_J$ is denoted by $W^J$ and is defined in terms of inversion sets as:
	\begin{equation}
		W^J := \{w \in W \mid \mathcal{I}(w) \subseteq \Phi^+ \setminus \Phi_J^+\}, 
	\end{equation}
	where $\Phi_J^+$ is the set of positive roots in the root system generated by the base $\{\alpha_j \mid j \in J\}$, that is, $ \Phi_J^+ = \Phi^+ \cap \text{Span}_{\mathbb{R}}(\{\alpha_j\}_{j \in J})$.
 
    Equivalently, $W^J$ is classically characterized by the right descent condition; it consists of those elements whose length strictly increases when multiplied on the right by any generator in $J$ \cite[Proposition~2.4.4]{BjornerBrenti2005}:
    \begin{equation}
        W^J = \{w \in W \mid \ell(ws_j) > \ell(w) \text{ for all } j \in J\}.
    \end{equation}
\end{definition}

The structure of $W$ factors nicely through these representatives. The following standard proposition will be crucial when establishing the bijection with the Modified Kostant Game.

\begin{proposition}[Parabolic Decomposition] \label{prop:parabolic_decomp}
	Every element $w \in W$ can be uniquely factored as $w = w^J \cdot w_J$, where $w^J \in W^J$ and $w_J \in W_J$. Furthermore, lengths are strictly additive under this decomposition:
	\begin{equation}
		\ell(w) = \ell(w^J) + \ell(w_J).
	\end{equation}
	(See \cite[Proposition~2.4.4]{BjornerBrenti2005}.)
\end{proposition}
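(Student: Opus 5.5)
To prove the parabolic decomposition I would argue by choosing, in each left coset $wW_J$, an element of minimal length and showing that it has all the required properties. The tools are the inversion set description $\ell(w)=|\mathcal{I}(w)|$ and the two equivalent characterizations of $W^J$ in the definition above. I will also use the standard exchange fact: for a simple reflection $s$, $\ell(us)=\ell(u)\pm 1$, and $\ell(us)<\ell(u)$ if and only if $u(\alpha_s)\in-\Phi^+$.

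\textbf{Existence.} Fix $w\in W$ and let $w^J$ be an element of minimal length in $wW_J$; this is possible since $W$ is finite. For every $j\in J$ the element $w^Js_j$ lies in the same coset, so minimality gives $\ell(w^Js_j)\ge \ell(w^J)$. Since lengths change by exactly one, the inequality is strict, and the descent characterization yields $w^J\in W^J$. Setting $w_J:=(w^J)^{-1}w\in W_J$ gives the factorization $w=w^Jw_J$.

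\textbf{Additivity.} For this I use the inversion sets. The key lemma is that for $u\in W^J$ and $v\in W_J$ we have
\[
\mathcal{I}(uv)=\mathcal{I}(v)\ \sqcup\ v^{-1}\mathcal{I}(u),
\]
with $\mathcal{I}(v)\subseteq\Phi_J^+$ and $v^{-1}\mathcal{I}(u)\subseteq\Phi^+\setminus\Phi_J^+$. I would prove it as follows.
\begin{enumerate}
\item Since $v\in W_J$ is a product of the $s_j$ with $j\in J$, it permutes $\Phi^+\setminus\Phi_J^+$. Indeed, each $s_j$ changes only the $\alpha_j$-coordinate of a root, so a positive root with a positive coefficient on some $\alpha_k$, $k\notin J$, stays positive and stays outside $\Phi_J$. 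It follows that $\mathcal{I}(v)\subseteq\Phi_J^+$.
\item For $\beta\in\Phi_J^+$ we have $v\beta\in\Phi_J$. Because $\mathcal{I}(u)\cap\Phi_J^+=\emptyset$, the root $u$ sends $\Phi_J^+$ into $\Phi^+$. Hence $uv\beta<0$ exactly when $v\beta<0$, so $\mathcal{I}(uv)\cap\Phi_J^+=\mathcal{I}(v)$.
\item For $\beta\in\Phi^+\setminus\Phi_J^+$, step 1 gives $v\beta\in\Phi^+\setminus\Phi_J^+$. Then $uv\beta<0$ if and only if $v\beta\in\mathcal{I}(u)$.
\end{enumerate}
Taking cardinalities gives $\ell(uv)=\ell(v)+\ell(u)$.

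\textbf{Uniqueness.} Suppose $w=uv=u'v'$ with $u,u'\in W^J$ and $v,v'\in W_J$. Then $u'=u\,(v(v')^{-1})$ with $v(v')^{-1}\in W_J$. Additivity applied to this product gives $\ell(u')=\ell(u)+\ell(v(v')^{-1})$. By symmetry we also get $\ell(u)=\ell(u')+\ell(v'v^{-1})$. Adding the two equations forces $\ell(v(v')^{-1})=0$, so $v=v'$ and therefore $u=u'$.

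\textbf{Main obstacle.} The delicate step is step 1: showing that $W_J$ stabilizes $\Phi^+\setminus\Phi_J^+$ and, in step 2, that $u\in W^J$ maps all of $\Phi_J^+$ to positive roots. The first follows from the coordinate argument above. The second uses the inversion-set definition of $W^J$ directly.
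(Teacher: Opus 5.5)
Your proof is correct, but it does not follow the route the paper relies on. The paper gives no argument of its own and defers to \cite[Proposition~2.4.4]{BjornerBrenti2005}. There the decomposition is proved by a combinatorial argument on reduced words that is valid in any Coxeter system and never uses roots.

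You instead work inside the reflection representation the paper itself uses. Your central lemma, $\mathcal{I}(uv)=\mathcal{I}(v)\sqcup v^{-1}\mathcal{I}(u)$ for all $u\in W^J$ and $v\in W_J$, says more than length additivity. It holds for every pair in $W^J\times W_J$, not only for the minimal-length factorization, and that is what makes your short uniqueness argument work. Applied to the parabolic decomposition $w_0=uv$ of the longest element, it gives $v^{-1}\mathcal{I}(u)=\Phi^+\setminus\Phi_J^+$, hence $\mathcal{I}(u)=\Phi^+\setminus\Phi_J^+$. The paper later uses exactly this fact, in coroot form, without proof in the root counting identity. What the cited approach offers in return is independence from any geometric realization: it is intrinsic to the Coxeter presentation.

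One point to tighten. Your existence step uses the descent characterization of $W^J$, while your additivity step uses the inversion-set characterization. The paper supports their equivalence only by citing the very proposition you are proving. To keep your argument self-contained, add the one-line proof using the exchange fact you already quote. First, $\ell(us_j)>\ell(u)$ if and only if $u(\alpha_j)\in\Phi^+$. Second, if this holds for all $j\in J$, then every $\gamma\in\Phi_J^+$ is a nonnegative integer combination of the $\alpha_j$ with $j\in J$. So $u(\gamma)$ is a root with nonnegative coefficients, that is, a positive root. (Also, in step 2, ``the root $u$'' should read ``the element $u$''.)
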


The elements of $W^J$ encode the combinatorial "shape" of the quotient space. Our primary objective is to demonstrate that these abstract algebraic representatives can be generated algorithmically by applying a modified set of chip-firing rules to the Dynkin diagram of $\Phi$.


\section{The Kostant Game}
\label{sec:kostant_game}

A recurring problem when studying root systems is determining efficiently which linear combinations of simple roots actually form roots within the system. The Kostant game, popularized in combinatorial literature (e.g., \cite{chen2017}), provides a dynamic and constructive solution to this problem. It allows us to generate positive roots by playing a ``chip-firing'' game on the Dynkin diagram.


\subsection{Definition on Simple Graphs}

To build intuition, we first define the game on simple, undirected graphs, which correspond to simply-laced root systems (types $A_n, D_n, E_n$).

\begin{definition}[Kostant Game]
	\label{def:kostant_game}
	Let $\Gamma = (V, E)$ be a simple graph with vertex set $V = \{1, \dots, n\}$. Let $N(i)$ denote the set of neighbors of a vertex $i \in V$. A \textbf{configuration} is a vector $c = (c_1, \dots, c_n) \in \Z_{\ge 0}^n$. 
	
	A vertex $i$ is said to be \textbf{sad} (or unhappy) in configuration $c$ if its value is strictly less than half the sum of the values of its neighbors:
	\begin{equation}
		c_i < \frac{1}{2} \sum_{j \in N(i)} c_j.
	\end{equation}
    If a vertex is not sad, it is either \textbf{happy} ($c_i = \frac{1}{2} \sum_{j \in N(i)} c_j$) or \textbf{excited} ($c_i > \frac{1}{2} \sum_{j \in N(i)} c_j$).\\
	
	A \textbf{move} consists of choosing a sad vertex $i$ and ``firing'' it. The firing operation updates the configuration $c$ to a new configuration $c'$, where the value of $i$ is replaced by the sum of its neighbors minus its current value, while all other vertices remain unchanged:
	\begin{equation}
		c'_i = \sum_{j \in N(i)} c_j - c_i, \quad \text{and} \quad c'_k = c_k \text{ for } k \neq i.
	\end{equation}
\end{definition}

The game ends when no vertex is sad, and if it is possible to terminate the game on $\Gamma$ we say that the graph is \textbf{Kostant finite}. A fundamental result states that a graph is Kostant finite for all valid initial configurations if and only if $\Gamma$ is a simply-laced connected Dynkin diagram \cite{chen2017}.

\begin{remark}
    While the game terminates on a simply-laced Dynkin diagram for any non-negative initial configuration, its connection to Lie theory is most direct when starting from a standard basis vector $c = e_i$ (representing a simple root $\alpha_i$). In this case, the set of all configurations reachable during the game corresponds precisely to the set of positive roots $\Phi^+$ of the associated Lie algebra \cite{chen2017}. Moreover, regardless of the sequence of moves, the game always terminates in the same final configuration, which corresponds to the highest root of $\Phi$ \cite{chen2017}.
\end{remark}

\begin{example}
Below is an illustrative example of the game on a graph that is Kostant finite, specifically on $D_4$, reviewing the entire set of possible configurations on this graph with their respective transitions and final configuration.

\begin{figure}[H]
    \centering

    \tikzset{
        vtx/.style={circle, draw, fill=white, inner sep=1pt, minimum size=18pt, font=\small},
    }
    
    \newcommand{\DfourConfig}[4]{
        \begin{tikzpicture}[remember picture]
            \node[vtx] (v1) at (0,0) {$#1$};
            \node[vtx] (v2) at (1.2,0) {$#2$};
            \node[vtx] (v3) at (2.4,0) {$#3$};
            \node[vtx] (v4) at (1.2,-1.2) {$#4$};
            \draw (v1) -- (v2) -- (v3);
            \draw (v2) -- (v4);
        \end{tikzpicture}
    }
    
    \begin{tikzpicture}[scale=0.7, transform shape]
        \node (c1000) at (-9, 10) {\DfourConfig{1}{}{}{}};
        \node (c0100) at (-3, 10) {\DfourConfig{}{1}{}{}};
        \node (c0010) at (3, 10)  {\DfourConfig{}{}{1}{}};
        \node (c0001) at (9, 10)  {\DfourConfig{}{}{}{1}};

        \node (c1100) at (-9, 6) {\DfourConfig{1}{1}{}{}};
        \node (c0110) at (0, 6)  {\DfourConfig{}{1}{1}{}};
        \node (c0101) at (9, 6)  {\DfourConfig{}{1}{}{1}};

        \node (c1110) at (-6, 2) {\DfourConfig{1}{1}{1}{}};
        \node (c1101) at (0, 2)  {\DfourConfig{1}{1}{}{1}};
        \node (c0111) at (6, 2)  {\DfourConfig{}{1}{1}{1}};
        
        \node (c1111) at (0, -2) {\DfourConfig{1}{1}{1}{1}};

        \node (c1211) at (0, -6) {\DfourConfig{1}{2}{1}{1}};

        \draw[->, thick] (c1000) -- (c1100);
        \draw[->, thick] (c0100) -- (c1100);
        
        \draw[->, thick] (c0100) -- (c0110);
        \draw[->, thick] (c0010) -- (c0110);
        
        \draw[->, thick] (c0100) -- (c0101);
        \draw[->, thick] (c0001) -- (c0101);

        \draw[->, thick] (c1100) -- (c1110);
        \draw[->, thick] (c0110) -- (c1110);

        \draw[->, thick] (c1100) -- (c1101);
        \draw[->, thick] (c0101) -- (c1101);
        
        \draw[->, thick] (c0110) -- (c0111);
        \draw[->, thick] (c0101) -- (c0111);

        \draw[->, thick] (c1110) -- (c1111);
        \draw[->, thick] (c1101) -- (c1111);
        \draw[->, thick] (c0111) -- (c1111);
        
        \draw[->, thick] (c1111) -- (c1211);

    \end{tikzpicture}
    \caption{The set of possible configurations of the Kostant game on $D_4$.}
    \label{fig:kostant_d4}
\end{figure}
\end{example}

\begin{example}[Infinite game on the affine graph $\tilde{D}_4$]
	To contrast with the finite behavior on classical Dynkin diagrams, we illustrate the divergence of the game on the affine graph $\tilde{D}_4$. The graph $\tilde{D}_4$ consists of a central vertex $v_0$ connected to four peripheral vertices $v_1, v_2, v_3$, and $v_4$.
	
	\begin{figure}[H]
		\centering
		\scalebox{0.7}{
			\begin{tikzpicture}
				\node[circle, draw, thick, minimum size=20pt, font=\Large] (v0) at (0,0) {$v_0$};
				\node[circle, draw, thick, minimum size=20pt, font=\Large] (v1) at (0:2.5) {$v_1$};
				\node[circle, draw, thick, minimum size=20pt, font=\Large] (v2) at (90:2.5) {$v_2$};
				\node[circle, draw, thick, minimum size=20pt, font=\Large] (v3) at (180:2.5) {$v_3$};
				\node[circle, draw, thick, minimum size=20pt, font=\Large] (v4) at (270:2.5) {$v_4$};
				\draw[thick] (v0) -- (v1) (v0) -- (v2) (v0) -- (v3) (v0) -- (v4);
			\end{tikzpicture}
		}
		\caption{The structure of the affine graph $\tilde{D}_4$.}
		\label{fig:affine_d4}
	\end{figure}
	
	Let $c = (c_0, c_1, c_2, c_3, c_4)$ denote the chip configuration. Start the game with the configuration $c_A = (1, 1, 1, 1, 1)$. We observe how this configuration evolves, demonstrating that the game cannot terminate.
	
	\begin{figure}[H]
		\centering
		\begin{tikzpicture}
			\begin{scope}[xshift=-5cm, scale=0.5]
				\node[circle, draw, thick, minimum size=10pt, font=\large] (v0) at (0,0) {1};
				\node[circle, draw, thick, minimum size=10pt, font=\large] (v1) at (0:2.5) {1};
				\node[circle, draw, thick, minimum size=10pt, font=\large] (v2) at (90:2.5) {1};
				\node[circle, draw, thick, minimum size=10pt, font=\large] (v3) at (180:2.5) {1};
				\node[circle, draw, thick, minimum size=10pt, font=\large] (v4) at (270:2.5) {1};
				\draw[thick] (v0) -- (v1) (v0) -- (v2) (v0) -- (v3) (v0) -- (v4);
				\node[text width=5cm, text centered, below=0.5cm] at (0,-3.5) {\textbf{Configuration $c_A$:} \\ $c=(1,1,1,1,1)$. \\ Only $v_0$ is sad since $1 < \frac{1}{2}(4) = 2$.};
			\end{scope}
			
			\node at (0, 0) {\huge$\longrightarrow$};
			
			\begin{scope}[xshift=5cm, scale=0.5]
				\node[circle, draw, thick, minimum size=10pt, font=\large] (v0) at (0,0) {3};
				\node[circle, draw, thick, minimum size=10pt, font=\large] (v1) at (0:2.5) {1};
				\node[circle, draw, thick, minimum size=10pt, font=\large] (v2) at (90:2.5) {1};
				\node[circle, draw, thick, minimum size=10pt, font=\large] (v3) at (180:2.5) {1};
				\node[circle, draw, thick, minimum size=10pt, font=\large] (v4) at (270:2.5) {1};
				\draw[thick] (v0) -- (v1) (v0) -- (v2) (v0) -- (v3) (v0) -- (v4);
				\node[text width=5cm, text centered, below=0.5cm] at (0,-3.5) {\textbf{Configuration $c_B$:} \\ $v_0 \mapsto -1 + 4 = 3$. \\ Now all peripheral vertices are sad.};
			\end{scope}
		\end{tikzpicture}
		\caption{A diverging transition of the Kostant game on $\tilde{D}_4$.}
		\label{fig:evolution_d4_affine}
	\end{figure}
	
	In configuration $c_B = (3, 1, 1, 1, 1)$, the dynamics unfold as follows:
	\begin{itemize}
		\item \textbf{Central vertex $v_0$:} Its value is $3$. The sum of its neighbors is $1+1+1+1=4$. Since $3 \not< 4/2 = 2$, $v_0$ is no longer sad.
		\item \textbf{Peripheral vertices $v_1, \dots, v_4$:} Each has a value of $1$. Their only neighbor is $v_0$, which has a value of $3$. Since $1 < 3/2 = 1.5$, all four peripheral vertices are strictly sad. Firing any peripheral vertex updates its value to $-1 + 3 = 2$. 
		\item \textbf{Infinite cycle of sadness:} Once all peripheral vertices are fired, they will each hold a value of $2$. The configuration becomes $(3, 2, 2, 2, 2)$. At this point, the central vertex $v_0$ evaluates its sadness condition again: the sum of its neighbors is $2 \times 4 = 8$. Since $3 < 8/2 = 4$, $v_0$ becomes sad once more, updating its value to $-3 + 8 = 5$.
	\end{itemize}
	
	This establishes a cyclic instability. Firing the peripheral vertices strictly increases their values and forces the central vertex to become sad again. When the central vertex fires, it strictly increases its own value and immediately induces sadness in all peripheral vertices. Consequently, the game admits an infinite sequence of valid moves, and the chip configuration grows unbounded. This confirms that the game on $\tilde{D}_4$ will never end.
\end{example}

\subsection{Multiply-Laced Diagrams}

The rules of the Kostant game naturally extend to multiply-laced Dynkin diagrams (types $B_n$, $C_n$, $F_4$, $G_2$). Here, edges are directed and may be multiple, so the adjacency must be weighted according to the root lengths.

For two adjacent vertices $i$ and $j$, denote by $n_{i,j}$ the number of arrows pointing from $j$ to $i$ in the diagram. Our convention is:
\begin{itemize}
    \item If $\alpha_i$ and $\alpha_j$ have the same length, then $n_{i,j}=1=n_{j,i}$.
    \item If $\alpha_i$ is long and $\alpha_j$ is short, then $n_{i,j}=1$ and $n_{j,i}=2$, or $n_{i,j}=1$ and $n_{j,i}=3$ (the only possibilities in the crystallographic cases $F_4$ and $G_2$).
\end{itemize}
Thus $n_{i,j}$ is simply the multiplicity of the edge directed from $j$ to $i$.

A configuration is a vector $c = (c_1,\dots,c_n)\in \Z_{\ge 0}^n$, just as in the simply‑laced case. For each vertex $i$, the state is determined by comparing $c_i$ with half the weighted sum of its neighbors:
\begin{itemize}
    \item $i$ is \textbf{happy} if $\displaystyle c_i = \frac12 \sum_{j\in N(i)} n_{i,j}\,c_j$,
    \item $i$ is \textbf{sad} if $\displaystyle c_i < \frac12 \sum_{j\in N(i)} n_{i,j}\,c_j$,
    \item $i$ is \textbf{excited} if $\displaystyle c_i > \frac12 \sum_{j\in N(i)} n_{i,j}\,c_j$.
\end{itemize}
A \textbf{move} consists of choosing a sad vertex $i$ and firing it, which updates the configuration to $c'$ by
\[
c'_i = -c_i + \sum_{j\in N(i)} n_{i,j}\,c_j, \qquad c'_k = c_k\;\text{for }k\neq i.
\]

When the game starts from a standard basis vector $c=e_i$ (representing a simple root $\alpha_i$), the set of all configurations reachable during the game corresponds exactly to the set of positive roots $\Phi^+$ of the associated Lie algebra~\cite{chen2017}. In contrast with the simply‑laced case, the final configuration need not be unique; the asymmetry introduced by the multiple edges can force the game into different terminals depending on the initial moves.

\begin{example}[Bifurcation in $F_4$]
	When playing the Kostant game on the $F_4$ diagram, the presence of the double arrow between the short and long roots breaks the perfect symmetry of the game. Depending on whether the initial moves are concentrated before the short root or after the long root, the game branches into two distinct valid paths, terminating in two different final configurations. The asymmetry introduced by the multiple edges restricts the state space to one of two possible sinks.

    \begin{figure}[H]
    \centering
    \begin{tikzpicture}[scale=0.8, transform shape]
        
        \tikzset{
            double-arrow/.style={double, double distance=1.5pt, -implies, shorten >=2pt, shorten <=2pt},
            vtx/.style={circle, draw, fill=white, inner sep=1pt, minimum size=16pt, font=\small},
            connect/.style={->, >=stealth, thick, shorten >=2pt, shorten <=2pt}
        }
        
        \begin{scope}[shift={(0,0)}]
            \node[vtx] (L1_1) at (0,0) {1};
            \node[vtx] (L1_2) at (1.2,0) {};
            \node[vtx] (L1_3) at (2.4,0) {};
            \node[vtx] (L1_4) at (3.6,0) {};
            \draw (L1_1) -- (L1_2);
            \draw[double-arrow] (L1_2) -- (L1_3);
            \draw (L1_3) -- (L1_4);
        \end{scope}
        
        \begin{scope}[shift={(4.2, -1.15)}]
            \node[vtx] (L2_1) at (0,0) {1};
            \node[vtx] (L2_2) at (1.2,0) {1};
            \node[vtx] (L2_3) at (2.4,0) {};
            \node[vtx] (L2_4) at (3.6,0) {};
            \draw (L2_1) -- (L2_2);
            \draw[double-arrow] (L2_2) -- (L2_3);
            \draw (L2_3) -- (L2_4);
        \end{scope}
        \draw[connect] (L1_4) -- (L2_1);
        
        \begin{scope}[shift={(0, -2.3)}]
            \node[vtx] (L3_1) at (0,0) {1};
            \node[vtx] (L3_2) at (1.2,0) {1};
            \node[vtx] (L3_3) at (2.4,0) {2};
            \node[vtx] (L3_4) at (3.6,0) {};
            \draw (L3_1) -- (L3_2);
            \draw[double-arrow] (L3_2) -- (L3_3);
            \draw (L3_3) -- (L3_4);
        \end{scope}
        \draw[connect] (L2_1) -- (L3_4);
        
        \begin{scope}[shift={(4.2, -3.45)}]
            \node[vtx] (L4_1) at (0,0) {1};
            \node[vtx] (L4_2) at (1.2,0) {1};
            \node[vtx] (L4_3) at (2.4,0) {2};
            \node[vtx] (L4_4) at (3.6,0) {2};
            \draw (L4_1) -- (L4_2);
            \draw[double-arrow] (L4_2) -- (L4_3);
            \draw (L4_3) -- (L4_4);
        \end{scope}
        \draw[connect] (L3_4) -- (L4_1);
        
        \begin{scope}[shift={(0, -4.6)}]
            \node[vtx] (L5_1) at (0,0) {1};
            \node[vtx] (L5_2) at (1.2,0) {2};
            \node[vtx] (L5_3) at (2.4,0) {2};
            \node[vtx] (L5_4) at (3.6,0) {2};
            \draw (L5_1) -- (L5_2);
            \draw[double-arrow] (L5_2) -- (L5_3);
            \draw (L5_3) -- (L5_4);
        \end{scope}
        \draw[connect] (L4_1) -- (L5_4);
        
        \begin{scope}[shift={(4.2, -5.75)}]
            \node[vtx] (L6_1) at (0,0) {1};
            \node[vtx] (L6_2) at (1.2,0) {2};
            \node[vtx] (L6_3) at (2.4,0) {4};
            \node[vtx] (L6_4) at (3.6,0) {2};
            \draw (L6_1) -- (L6_2);
            \draw[double-arrow] (L6_2) -- (L6_3);
            \draw (L6_3) -- (L6_4);
        \end{scope}
        \draw[connect] (L5_4) -- (L6_1);
        
        \begin{scope}[shift={(0, -6.9)}]
            \node[vtx] (L7_1) at (0,0) {1};
            \node[vtx] (L7_2) at (1.2,0) {3};
            \node[vtx] (L7_3) at (2.4,0) {4};
            \node[vtx] (L7_4) at (3.6,0) {2};
            \draw (L7_1) -- (L7_2);
            \draw[double-arrow] (L7_2) -- (L7_3);
            \draw (L7_3) -- (L7_4);
        \end{scope}
        \draw[connect] (L6_1) -- (L7_4);
        
        \begin{scope}[shift={(4.2, -8.05)}]
            \node[vtx] (L8_1) at (0,0) {2};
            \node[vtx] (L8_2) at (1.2,0) {3};
            \node[vtx] (L8_3) at (2.4,0) {4};
            \node[vtx] (L8_4) at (3.6,0) {2};
            \draw (L8_1) -- (L8_2);
            \draw[double-arrow] (L8_2) -- (L8_3);
            \draw (L8_3) -- (L8_4);
        \end{scope}
        \draw[connect] (L7_4) -- (L8_1);
        
        \begin{scope}[shift={(9.2, 0)}]
            \node[vtx] (R1_1) at (0,0) {};
            \node[vtx] (R1_2) at (1.2,0) {};
            \node[vtx] (R1_3) at (2.4,0) {};
            \node[vtx] (R1_4) at (3.6,0) {1};
            \draw (R1_1) -- (R1_2);
            \draw[double-arrow] (R1_2) -- (R1_3);
            \draw (R1_3) -- (R1_4);
        \end{scope}
        
        \begin{scope}[shift={(13.4, -1.15)}]
            \node[vtx] (R2_1) at (0,0) {};
            \node[vtx] (R2_2) at (1.2,0) {};
            \node[vtx] (R2_3) at (2.4,0) {1};
            \node[vtx] (R2_4) at (3.6,0) {1};
            \draw (R2_1) -- (R2_2);
            \draw[double-arrow] (R2_2) -- (R2_3);
            \draw (R2_3) -- (R2_4);
        \end{scope}
        \draw[connect] (R1_4) -- (R2_1);
        
        \begin{scope}[shift={(9.2, -2.3)}]
            \node[vtx] (R3_1) at (0,0) {};
            \node[vtx] (R3_2) at (1.2,0) {1};
            \node[vtx] (R3_3) at (2.4,0) {1};
            \node[vtx] (R3_4) at (3.6,0) {1};
            \draw (R3_1) -- (R3_2);
            \draw[double-arrow] (R3_2) -- (R3_3);
            \draw (R3_3) -- (R3_4);
        \end{scope}
        \draw[connect] (R2_1) -- (R3_4);
        
        \begin{scope}[shift={(13.4, -3.45)}]
            \node[vtx] (R4_1) at (0,0) {};
            \node[vtx] (R4_2) at (1.2,0) {1};
            \node[vtx] (R4_3) at (2.4,0) {2};
            \node[vtx] (R4_4) at (3.6,0) {1};
            \draw (R4_1) -- (R4_2);
            \draw[double-arrow] (R4_2) -- (R4_3);
            \draw (R4_3) -- (R4_4);
        \end{scope}
        \draw[connect] (R3_4) -- (R4_1);
        
        \begin{scope}[shift={(9.2, -4.6)}]
            \node[vtx] (R5_1) at (0,0) {1};
            \node[vtx] (R5_2) at (1.2,0) {1};
            \node[vtx] (R5_3) at (2.4,0) {2};
            \node[vtx] (R5_4) at (3.6,0) {1};
            \draw (R5_1) -- (R5_2);
            \draw[double-arrow] (R5_2) -- (R5_3);
            \draw (R5_3) -- (R5_4);
        \end{scope}
        \draw[connect] (R4_1) -- (R5_4);
        
        \begin{scope}[shift={(13.4, -5.75)}]
            \node[vtx] (R6_1) at (0,0) {1};
            \node[vtx] (R6_2) at (1.2,0) {2};
            \node[vtx] (R6_3) at (2.4,0) {2};
            \node[vtx] (R6_4) at (3.6,0) {1};
            \draw (R6_1) -- (R6_2);
            \draw[double-arrow] (R6_2) -- (R6_3);
            \draw (R6_3) -- (R6_4);
        \end{scope}
        \draw[connect] (R5_4) -- (R6_1);
        
        \begin{scope}[shift={(9.2, -6.9)}]
            \node[vtx] (R7_1) at (0,0) {1};
            \node[vtx] (R7_2) at (1.2,0) {2};
            \node[vtx] (R7_3) at (2.4,0) {3};
            \node[vtx] (R7_4) at (3.6,0) {1};
            \draw (R7_1) -- (R7_2);
            \draw[double-arrow] (R7_2) -- (R7_3);
            \draw (R7_3) -- (R7_4);
        \end{scope}
        \draw[connect] (R6_1) -- (R7_4);
        
        \begin{scope}[shift={(13.4, -8.05)}]
            \node[vtx] (R8_1) at (0,0) {1};
            \node[vtx] (R8_2) at (1.2,0) {2};
            \node[vtx] (R8_3) at (2.4,0) {3};
            \node[vtx] (R8_4) at (3.6,0) {2};
            \draw (R8_1) -- (R8_2);
            \draw[double-arrow] (R8_2) -- (R8_3);
            \draw (R8_3) -- (R8_4);
        \end{scope}
        \draw[connect] (R7_4) -- (R8_1);
        
    \end{tikzpicture}
    \caption{Two paths in the Kostant Game on the $F_4$ diagram ending in different configurations.}
    \label{fig:kostant_f4_final}
\end{figure}
\end{example}


\subsection{Definition with External Sources}

Let $\Phi$ be a root system with simple roots $\Delta = \{\alpha_1, \dots, \alpha_n\}$ and Cartan matrix $A$. We select a subset of vertices $I \subseteq \{1, \dots, n\}$ to act as \textbf{modified nodes} or ``external sources''. The corresponding parabolic subgroup will be determined by the unmodified vertices $J = S \setminus I$.

\begin{definition}[Modified Kostant Game with active set $I$]
    A \textbf{configuration} is a vector $c = (c_1,\dots,c_n) \in \mathbb{Z}_{\ge 0}^n$.
    
    For a vertex $v$, let $N(v)$ denote its neighbors in the Dynkin diagram, and let $n_{v,u}$ be the number of arrows pointing from $u$ to $v$. The state of a vertex is defined as follows:
    \begin{itemize}
        \item $v$ is \textbf{sad} (or unhappy) if
        \begin{equation}
            c_v < \frac12 \Biggl( \sum_{u \in N(v)} n_{v,u}\,c_u \;+\; \sum_{p \in I} \delta_{v,p} \Biggr),
        \end{equation}
        \item $v$ is \textbf{happy} if equality holds,
        \item $v$ is \textbf{excited} if the inequality is reversed ($>$).
    \end{itemize}
    If a vertex $v$ is sad, we may \textbf{fire} it, updating the configuration to $c'$ given by
    \begin{equation}
        c'_v = \sum_{u \in N(v)} n_{v,u}\,c_u + \sum_{p \in I} \delta_{v,p} - c_v,
    \end{equation}
    and $c'_k = c_k$ for all $k \neq v$. The game always starts from the zero configuration $c^{(0)} = \mathbf{0}$.
\end{definition}

The additional term $\sum_{p\in I}\delta_{v,p}$ acts as an external forcing: without it the zero configuration would be stable because every vertex would be (trivially) happy. The chosen set $I$ therefore plays the role of an activator that triggers the dynamics. In the sequel, we refer to this process as the \textbf{Modified Kostant Game on $I$} (or with active set $I$).

Note that, by definition, a vertex is happy or excited precisely when it is not sad, exactly as in the classical game.

\begin{example}
    Figure~\ref{fig:a4_mod2vertices} shows two instances of the modified game on $A_4$, one with $I=\{1\}$ and the other with $I=\{2\}$. In both cases, being simply‑laced diagrams, the set of reachable configurations converges to a unique final configuration, just as in the original Kostant game.
    
\begin{figure}[H]
\centering

\begin{tikzpicture}[scale=1]
  \node[draw,fill=black,shape=circle,scale=0.5] (a) at (0,0) {$\color{white}{1}$};
  \node[draw,fill=white,shape=circle,scale=0.5] (b) at (1,0) {\phantom{$0$}};
  \node[draw,fill=white,shape=circle,scale=0.5] (c) at (2,0) {\phantom{$0$}};
  \node[draw,fill=white,shape=circle,scale=0.5] (d) at (3,0) {\phantom{$0$}};
  \node[draw,fill=white,shape=circle,scale=0.5] (e) at (4,0) {\phantom{$0$}};
  \draw (a)--(b)--(c)--(d)--(e);
  \draw [->] (2,-0.5) -- (2,-1);
  \node[draw,fill=black,shape=circle,scale=0.5] (a) at (0,-1.5) {$\color{white}{1}$};
  \node[draw,fill=white,shape=circle,scale=0.5] (b) at (1,-1.5) {$1$};
  \node[draw,fill=white,shape=circle,scale=0.5] (c) at (2,-1.5) {\phantom{$0$}};
  \node[draw,fill=white,shape=circle,scale=0.5] (d) at (3,-1.5) {\phantom{$0$}};
  \node[draw,fill=white,shape=circle,scale=0.5] (e) at (4,-1.5) {\phantom{$0$}};
  \draw (a)--(b)--(c)--(d)--(e);
  \draw [->] (2,-2) -- (2,-2.5);
  \node[draw,fill=black,shape=circle,scale=0.5] (a) at (0,-3)
  {$\color{white}{1}$};
  \node[draw,fill=white,shape=circle,scale=0.5] (b) at (1,-3) {$1$};
  \node[draw,fill=white,shape=circle,scale=0.5] (c) at (2,-3) {$1$};
  \node[draw,fill=white,shape=circle,scale=0.5] (d) at (3,-3) {\phantom{$0$}};
  \node[draw,fill=white,shape=circle,scale=0.5] (e) at (4,-3) {\phantom{$0$}};
  \draw (a)--(b)--(c)--(d)--(e);
  \draw [->] (2,-3.5) -- (2,-4);
  \node[draw,fill=black,shape=circle,scale=0.5] (a) at (0,-4.5)
  {$\color{white}{1}$};
  \node[draw,fill=white,shape=circle,scale=0.5] (b) at (1,-4.5) {$1$};
  \node[draw,fill=white,shape=circle,scale=0.5] (c) at (2,-4.5) {$1$};
  \node[draw,fill=white,shape=circle,scale=0.5] (d) at (3,-4.5) {$1$};
  \node[draw,fill=white,shape=circle,scale=0.5] (e) at (4,-4.5) {\phantom{$0$}};
  \draw (a)--(b)--(c)--(d)--(e);
  \draw [->] (2,-5) -- (2,-5.5);
  \node[draw,fill=black,shape=circle,scale=0.5] (a) at (0,-6)
  {$\color{white}{1}$};
  \node[draw,fill=white,shape=circle,scale=0.5] (b) at (1,-6) {$1$};
  \node[draw,fill=white,shape=circle,scale=0.5] (c) at (2,-6) {$1$};
  \node[draw,fill=white,shape=circle,scale=0.5] (d) at (3,-6) {$1$};
  \node[draw,fill=white,shape=circle,scale=0.5] (e) at (4,-6) {$1$};
  \draw (a)--(b)--(c)--(d)--(e);
\end{tikzpicture}
\qquad \qquad \qquad
\begin{tikzpicture}[scale=1]
  \node[draw,shape=circle,scale=0.5] (a) at (-0.5,0) {\phantom{$0$}};
  \node[draw,fill=white,shape=circle,scale=0.5] (b) at (0.5,0) {\phantom{$0$}};
  \node[draw,fill=white,shape=circle,scale=0.5] (c) at (1.5,0) {\phantom{$0$}};
  \node[draw,fill=black,shape=circle,scale=0.5] (d) at (-0.5,1) {$\color{white}{1}$};
  \node[draw,fill=white,shape=circle,scale=0.5] (e) at (-1.5,0) {\phantom{$0$}};
  \draw (e) -- (a) -- (b)--(c);
  \draw (a) -- (d);
  \draw [->] (-0.5,-0.5) -- (-0.5,-1);
\node[draw,shape=circle,scale=0.5] (a) at (-0.5,-2.5) {$1$};
  \node[draw,fill=white,shape=circle,scale=0.5] (b) at (0.5,-2.5) {\phantom{$0$}};
  \node[draw,fill=white,shape=circle,scale=0.5] (c) at (1.5,-2.5) {\phantom{$0$}};
  \node[draw,fill=black,shape=circle,scale=0.5] (d) at (-0.5,-1.5) {$\color{white}{1}$};
  \node[draw,fill=white,shape=circle,scale=0.5] (e) at (-1.5,-2.5) {\phantom{$0$}};
  \draw (e) -- (a) -- (b)--(c);
  \draw (a) -- (d);
  \draw [->] (-1,-3) -- (-2,-3.5);
  \draw [->] (0,-3) -- (1,-3.5);
\node[draw,shape=circle,scale=0.5] (a) at (-3,-5) {$1$};
  \node[draw,fill=white,shape=circle,scale=0.5] (b) at (-2,-5) {\phantom{$0$}};
  \node[draw,fill=white,shape=circle,scale=0.5] (c) at (-1,-5) {\phantom{$0$}};
  \node[draw,fill=black,shape=circle,scale=0.5] (d) at (-3,-4) {$\color{white}{1}$};
  \node[draw,fill=white,shape=circle,scale=0.5] (e) at (-4,-5) {$1$};
  \draw (e) -- (a) -- (b)--(c);
  \draw (a) -- (d);
\node[draw,shape=circle,scale=0.5] (a) at (2,-5) {$1$};
  \node[draw,fill=white,shape=circle,scale=0.5] (b) at (3,-5) {$1$};
  \node[draw,fill=white,shape=circle,scale=0.5] (c) at (4,-5) {\phantom{$0$}};
  \node[draw,fill=black,shape=circle,scale=0.5] (d) at (2,-4) {$\color{white}{1}$};
  \node[draw,fill=white,shape=circle,scale=0.5] (e) at (1,-5) {\phantom{$0$}};
  \draw (e) -- (a) -- (b)--(c);
  \draw (a) -- (d);
  \draw [->] (-2,-5.5) -- (-1,-6);
  \draw [->] (1,-5.5) -- (0,-6);
  \draw [->] (-3,-5.5) -- (-3,-6);
  \draw [->] (2,-5.5) -- (2,-6);
  \node[draw,shape=circle,scale=0.5] (a) at (-3,-7.5) {$1$};
  \node[draw,fill=white,shape=circle,scale=0.5] (b) at (-2,-7.5) {$1$};
  \node[draw,fill=white,shape=circle,scale=0.5] (c) at (-1,-7.5) {\phantom{$0$}};
  \node[draw,fill=black,shape=circle,scale=0.5] (d) at (-3,-6.5) {$\color{white}{1}$};
  \node[draw,fill=white,shape=circle,scale=0.5] (e) at (-4,-7.5){$1$};
  \draw (e) -- (a) -- (b)--(c);
  \draw (a) -- (d);
  \node[draw,shape=circle,scale=0.5] (a) at (2,-7.5) {$1$};
  \node[draw,fill=white,shape=circle,scale=0.5] (b) at (3,-7.5) {$1$};
  \node[draw,fill=white,shape=circle,scale=0.5] (c) at (4,-7.5) {$1$};
  \node[draw,fill=black,shape=circle,scale=0.5] (d) at (2,-6.5) {$\color{white}{1}$};
  \node[draw,fill=white,shape=circle,scale=0.5] (e) at (1,-7.5){\phantom{$0$}};
  \draw (e) -- (a) -- (b)--(c);
  \draw (a) -- (d);
  \draw [->] (-2,-8) -- (-1,-8.5);
  \draw [->] (1,-8) -- (0,-8.5);
  \draw [->] (-3,-8) -- (-3,-8.5);
  \draw [->] (2,-8) -- (2,-8.5);
  \node[draw,shape=circle,scale=0.5] (a) at (-3,-10) {$2$};
  \node[draw,fill=white,shape=circle,scale=0.5] (b) at (-2,-10) {$1$};
  \node[draw,fill=white,shape=circle,scale=0.5] (c) at (-1,-10) {\phantom{$0$}};
  \node[draw,fill=black,shape=circle,scale=0.5] (d) at (-3,-9) {$\color{white}{1}$};
  \node[draw,fill=white,shape=circle,scale=0.5] (e) at (-4,-10){$1$};
  \draw (e) -- (a) -- (b)--(c);
  \draw (a) -- (d);
  \node[draw,shape=circle,scale=0.5] (a) at (2,-10) {$1$};
  \node[draw,fill=white,shape=circle,scale=0.5] (b) at (3,-10) {$1$};
  \node[draw,fill=white,shape=circle,scale=0.5] (c) at (4,-10) {$1$};
  \node[draw,fill=black,shape=circle,scale=0.5] (d) at (2,-9) {$\color{white}{1}$};
  \node[draw,fill=white,shape=circle,scale=0.5] (e) at (1,-10){$1$};
  \draw (e) -- (a) -- (b)--(c);
  \draw (a) -- (d);
   \draw [->] (-2,-10.5) -- (-1,-11);
  \draw [->] (1,-10.5) -- (0,-11);
  \node[draw,shape=circle,scale=0.5] (a) at (-0.5,-12.5) {$2$};
  \node[draw,fill=white,shape=circle,scale=0.5] (b) at (0.5,-12.5) {$1$};
  \node[draw,fill=white,shape=circle,scale=0.5] (c) at (1.5,-12.5) {$1$};
  \node[draw,fill=black,shape=circle,scale=0.5] (d) at (-0.5,-11.5) {$\color{white}{1}$};
  \node[draw,fill=white,shape=circle,scale=0.5] (e) at (-1.5,-12.5) {$1$};
  \draw (e) -- (a) -- (b)--(c);
  \draw (a) -- (d);
  \draw [->] (-0.5,-13) -- (-0.5,-13.5);
  \node[draw,shape=circle,scale=0.5] (a) at (-0.5,-15) {$2$};
  \node[draw,fill=white,shape=circle,scale=0.5] (b) at (0.5,-15) {$2$};
  \node[draw,fill=white,shape=circle,scale=0.5] (c) at (1.5,-15) {$1$};
  \node[draw,fill=black,shape=circle,scale=0.5] (d) at (-0.5,-14) {$\color{white}{1}$};
  \node[draw,fill=white,shape=circle,scale=0.5] (e) at (-1.5,-15) {$1$};
  \draw (e) -- (a) -- (b)--(c);
  \draw (a) -- (d);
\end{tikzpicture}
\caption{Two instances of the Modified Kostant Game on $A_4$: on the left with active set $I=\{1\}$, on the right with $I=\{2\}$.}
\label{fig:a4_mod2vertices}
\end{figure}
\end{example}
\subsection{Bijection with Minimal Coset Representatives}

The central result of our work is that this modified game completely models the structure of parabolic quotients. Let $J = S \setminus I$ be the set of unmodified vertices. 

\begin{theorem}
	\label{thm:bijection}
	There exists a canonical bijection between the set of all reachable configurations in the Modified Kostant Game (with active set $I$) and the set $W^J$ of minimal length coset representatives of $W/W_J$. Specifically, a sequence of valid vertex firings $(k_1, k_2, \dots, k_m)$ corresponds uniquely to a reduced expression $w = s_{k_m} \dots s_{k_1} \in W^J$.
\end{theorem}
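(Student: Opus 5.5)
The plan is to realize the game inside the weight lattice. Let $\omega_1,\dots,\omega_n$ be the fundamental weights, dual to the simple coroots, so $\langle \omega_p,\alpha_v^\vee\rangle=\delta_{pv}$. Put $\lambda_I=\sum_{p\in I}\omega_p$. To a configuration $c$ we attach the weight
\[
\mu(c)=\lambda_I-\sum_{u=1}^n c_u\alpha_u .
\]
The first step is to check that the firing rule is exactly the action of a simple reflection on this weight.

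Using $\langle \alpha_u,\alpha_v^\vee\rangle=a_{uv}$, $a_{vv}=2$, and the identification $n_{v,u}=-a_{uv}$ for $u\neq v$, one gets
\[
\langle \mu(c),\alpha_v^\vee\rangle=\sum_{p\in I}\delta_{v,p}+\sum_{u\in N(v)}n_{v,u}c_u-2c_v .
\]
So $v$ is sad exactly when $\langle\mu(c),\alpha_v^\vee\rangle>0$. Firing $v$ replaces $c_v$ by $c_v+\langle\mu(c),\alpha_v^\vee\rangle$, which means $\mu(c')=s_v\mu(c)$.

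The identification $n_{v,u}=-a_{uv}$ must be checked against the paper's arrow convention (arrows from long to short roots, $n_{v,u}$ counting arrows $u\to v$). I expect this to be the main bookkeeping obstacle. If the convention turns out to be transposed, I will instead run the same argument with coroots and the fundamental coweights, which swaps $A$ and $A^T$. Either way the structure of the proof is unchanged. I will test the identification on $B_2$/$C_2$ and on the $F_4$ example before proceeding.

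Consequently, after a firing sequence $(k_1,\dots,k_m)$ we have $\mu(c)=w\lambda_I$ with $w=s_{k_m}\cdots s_{k_1}$. Since $\lambda_I$ is dominant and $\langle\lambda_I,\alpha_j^\vee\rangle=0$ exactly for $j\in J$, its stabilizer is $W_J$. Hence $w\mapsto w\lambda_I$ is injective on $W^J$, and via Proposition~\ref{prop:parabolic_decomp} it is a bijection $W^J\to W\lambda_I$. The canonical map is then
\[
c\;\longmapsto\;\text{the unique } w\in W^J \text{ with } w\lambda_I=\mu(c).
\]
It remains to show that reachable configurations are exactly the images of $W^J$, and that valid firing sequences give reduced words.

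Forward direction: by induction on $m$, assume $w\in W^J$ with $\ell(w)=m$ and $v$ sad. Then
\[
0<\langle w\lambda_I,\alpha_v^\vee\rangle=\langle\lambda_I,(w^{-1}\alpha_v)^\vee\rangle .
\]
Writing $w^{-1}\alpha_v=\sum_i b_i\alpha_i$ with $b_i$ all of one sign, this pairing is a positive multiple of $\sum_{p\in I}b_p$. It follows that $w^{-1}\alpha_v\in\Phi^+\setminus\Phi_J^+$.

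From this, $\ell(s_vw)=\ell(w)+1$, because $s_v$ is not a left descent of $w$. Moreover
\[
\mathcal I(s_vw)=\mathcal I(w)\cup\{w^{-1}\alpha_v\}\subseteq\Phi^+\setminus\Phi_J^+,
\]
so $s_vw\in W^J$. By induction every valid firing word is reduced and lands in $W^J$. Nonnegativity of $c$ is automatic, since $\lambda_I-w\lambda_I$ is a nonnegative combination of simple roots.

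Backward direction: by induction on length, take $w\in W^J$ with $w\neq e$. Choose a left descent $s_v$, so $\beta=-w^{-1}\alpha_v\in\mathcal I(w)\subseteq\Phi^+\setminus\Phi_J^+$. Then $w'=s_vw$ has smaller length and $\mathcal I(w')=\mathcal I(w)\setminus\{\beta\}$, so $w'\in W^J$. Also
\[
\langle w'\lambda_I,\alpha_v^\vee\rangle=-\langle w\lambda_I,\alpha_v^\vee\rangle=\langle\lambda_I,\beta^\vee\rangle>0,
\]
because $\beta\notin\Phi_J$. So $v$ is sad at the configuration of $w'$, and firing it reaches $w$.

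Iterating this peeling yields, for every reduced word of $w$ as a product of left factors, a valid firing sequence. Together with the forward direction, this gives the bijection of configurations with $W^J$ and the correspondence of firing sequences with reduced expressions $s_{k_m}\cdots s_{k_1}$.
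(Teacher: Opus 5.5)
Your proof is correct. Its core mechanism is the paper's, but you establish the configuration-level bijection by a genuinely different argument.

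\textbf{Shared mechanism.} The paper's formal source vector $\beta\in E'$ differs from $-\lambda_I$ by a $W$-fixed vector. So its state $v_m=w_m(\beta)$ carries the same information as your $w\lambda_I$; in both, $c=\lambda_I-w\lambda_I$. Your criterion $\langle\lambda_I,\gamma^\vee\rangle>0\iff\gamma\in\Phi^+\setminus\Phi_J^+$ is the paper's identity $K_\lambda=\mathrm{ht}_I\bigl(w_{\lambda-1}^{-1}(\alpha_{i_\lambda}^\vee)\bigr)$.

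\textbf{Conventions.} Your worry here is unnecessary. The identification $n_{v,u}=-\langle\alpha_u,\alpha_v^\vee\rangle$ is exactly the one the paper uses in the proof of the root counting identity. It also reproduces the $F_4$ example, where the short vertex $3$ receives $2c_2$ from the long vertex $2$.

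\textbf{A small wording point.} Expanding $\gamma^\vee=\sum_j k_j\alpha_j^\vee$ gives $\langle\lambda_I,\gamma^\vee\rangle=\sum_{p\in I}k_p$. When $I$ mixes root lengths, this is a positively weighted sum of the $b_p$, not a fixed multiple of $\sum_{p\in I}b_p$. Only the sign matters, so nothing breaks.

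\textbf{Where you diverge.} The difference is in passing from firing sequences to configurations.
\begin{itemize}
\item The paper asserts, without proof, that every maximal firing sequence is a reduced word for $w_0^J$. It then extends two sequences reaching the same $c$ by a common tail and cancels that tail.
\item You instead use that the stabilizer of the dominant weight $\lambda_I$ is $W_J$, so $w\mapsto w\lambda_I$ is injective on $W^J$. Combined with injectivity of $c\mapsto\mu(c)$, this makes $c\mapsto w$ well defined and injective at once.
\end{itemize}

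\textbf{What each route buys.} Your route is shorter and needs no appeal to termination. It also supplies the missing justification for the paper's terminal-configuration claim: no sad vertex means $w\lambda_I$ is antidominant, hence $w\lambda_I=w_0\lambda_I$ and $w=w_0^J$. It stays inside $E$, so you never need to check that the extended form and $W$-action on $E'$ are well defined. The paper's route gives uniqueness of the final configuration along the way, which it reuses for the root counting identity.

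The backward directions are equivalent: you peel left descents inductively, while the paper checks each step of a fixed reduced word using $\tilde\alpha_\lambda\in\mathcal{I}(w)$.
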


To rigorously prove Theorem \ref{thm:bijection}, we first construct an algebraic model that parallels the combinatorial game. Let $E$ be the Euclidean space spanned by the simple roots. We introduce an extended vector space $E' = E \oplus \bigoplus_{p \in I} \R \beta_p$, where $\beta_p$ are formal basis vectors associated with the external sources. We extend the invariant bilinear form $\langle \cdot, \cdot \rangle$ to $E'$ such that $\langle \beta_p, \beta_q \rangle = \delta_{p,q}$ and $\langle \beta_p, \alpha_i^\vee \rangle = -\delta_{p,i}$ for all $p, q \in I$ and $i \in \{1, \dots, n\}$. 

The action of the Weyl group $W$ extends to $E'$ via the standard reflection formula 
\[
s_i(x) = x - \langle x, \alpha_i^\vee \rangle \alpha_i .
\]
For the source vectors one obtains $s_i(\beta_p) = \beta_p + \delta_{pi}\alpha_i$; one readily checks that the braid relations of $W$ are preserved, so this is a well‑defined action. 

Let $\beta = \sum_{p \in I} \beta_p \in E'$ be the total source vector. For a sequence of moves $(k_1, \dots, k_m)$, we define the Weyl group elements $w_m = s_{k_m} \dots s_{k_1}$ and the state vectors $v_m = w_m(\beta)$. 

By induction on $m$ one verifies that the game configuration $c^{(m)}$ is exactly recovered by $c^{(m)} = v_m - \beta$. Indeed, the reflection formula gives $s_v(\beta) = \beta + \sum_{p\in I}\delta_{pv}\,\alpha_v$, and a straightforward induction using the update rule of the game shows that the coefficients of the $\alpha_j$ in $v_m$ are precisely the chip numbers. Crucially, evaluating the coefficient of $\alpha_v$ in $s_v(v_m) - v_m$ reveals that the sadness condition at vertex $v$ is algebraically equivalent to $\langle v_m, \alpha_v^\vee \rangle < 0$. We now proceed to the proof of the bijection.

\begin{proof}[Proof of Theorem \ref{thm:bijection}]
    The proof relies on the algebraic equivalence established above. We first show that any valid sequence of moves yields a reduced expression of an element in $W^J$, and conversely that every reduced expression of an element of $W^J$ can be realised as a valid sequence of moves.

    \medskip
    \noindent\textit{From the game to the Weyl group.}
    Suppose that $(i_1, \dots, i_t)$ is a sequence of valid moves. Validity at step $\lambda$ (for $1 \le \lambda \le t$) is equivalent to the algebraic condition:
    \[
    \langle v_{\lambda-1}, \alpha_{i_\lambda}^\vee \rangle < 0.
    \]
    We define the integer quantity $K_\lambda$ from this condition:
    \[
    K_\lambda := -\langle v_{\lambda-1}, \alpha_{i_\lambda}^\vee \rangle.
    \]
    The validity of the move means that $K_\lambda$ must be a strictly positive integer. Using the $W$-invariance of the bilinear form, we can express $K_\lambda$ in terms of the initial state $\beta$:
    \[
    K_\lambda = -\langle w_{\lambda-1}(\beta), \alpha_{i_\lambda}^\vee \rangle = -\langle \beta, w_{\lambda-1}^{-1}(\alpha_{i_\lambda}^\vee) \rangle.
    \]

    The key object in this expression is the transformed co-root $\tilde{\gamma}^\vee := w_{\lambda-1}^{-1}(\alpha_{i_\lambda}^\vee)$. We will show that this co-root must be positive.

    The fundamental reason is that, by construction, the aggregate vector $\beta$ has a non-positive inner product with any positive co-root. By definition, $\langle \beta, \alpha_j^\vee \rangle = -\sum_{p\in I}\delta_{pj} \le 0$ for every simple root $\alpha_j$. Since every positive co-root $\gamma^\vee \in (\Phi^\vee)^+$ is a linear combination with non-negative integer coefficients of simple co-roots, it follows by linearity that $\langle \beta, \gamma^\vee \rangle \le 0$.

    Now, suppose for the sake of contradiction that $\tilde{\gamma}^\vee$ is a negative co-root. Then, its opposite $-\tilde{\gamma}^\vee$ would be a positive co-root. Calculating $K_\lambda$, we would have:
    \[
    K_\lambda = -\langle \beta, \tilde{\gamma}^\vee \rangle = \langle \beta, -\tilde{\gamma}^\vee \rangle \le 0.
    \]
    Since $-\tilde{\gamma}^\vee$ is a positive co-root, we know that this inner product must be $\le 0$. This implies that $K_\lambda \le 0$, which directly contradicts the condition that the move is valid (which requires $K_\lambda > 0$).

    Therefore, the transformed co-root $w_{\lambda-1}^{-1}(\alpha_{i_\lambda}^\vee)$ must be positive.

    Now, to find the explicit value of $K_\lambda$, we expand the positive co-root $w_{\lambda-1}^{-1}(\alpha_{i_\lambda}^\vee)$ in the basis of simple co-roots:
    \[
    w_{\lambda-1}^{-1}(\alpha_{i_\lambda}^\vee) = \sum_{j=1}^n k_j \alpha_j^\vee, \quad \text{where } k_j \in \mathbb{Z}_{\ge 0}.
    \]
    Substituting this expansion into our formula for $K_\lambda$:
    \begin{align*}
        K_\lambda &= -\left\langle \sum_{p \in I} \beta_p, \sum_{j=1}^n k_j \alpha_j^\vee \right\rangle = -\sum_{j=1}^n k_j \left\langle \sum_{p \in I} \beta_p, \alpha_j^\vee \right\rangle \\
        &= -\sum_{j=1}^n k_j \left( -\sum_{p \in I} \delta_{pj} \right) = \sum_{j \in I} k_j.
    \end{align*}
    The condition that the move is valid ($K_\lambda > 0$) translates to $\sum_{j \in I} k_j > 0$. Since all coefficients $k_j$ are non-negative integers, this implies that there must exist at least one index $j_0 \in I$ such that $k_{j_0} > 0$. This demonstrates that the co-root $w_{\lambda-1}^{-1}(\alpha_{i_\lambda}^\vee)$ cannot be written as a linear combination (with non-negative coefficients) of simple co-roots from $J=S\setminus I$. In other words:
    \[
    w_{\lambda-1}^{-1}(\alpha_{i_\lambda}^\vee) \in (\Phi^\vee)^+ \setminus (\Phi_J^\vee)^+.
    \]
    The map sending a root to its co-root ($\alpha \mapsto \alpha^\vee$) is a bijection from the set of roots $\Phi$ to $\Phi^\vee$ that is $W$-equivariant and preserves positivity. Therefore, the condition on the co-root is equivalent to a condition on the root $\tilde{\alpha}_\lambda := w_{\lambda-1}^{-1}(\alpha_{i_\lambda})$:
    \[
    \tilde{\alpha}_\lambda \in \Phi^+ \setminus \Phi_J^+.
    \]
    The first part, $\tilde{\alpha}_\lambda \in \Phi^+$, is the classical condition for the length of the Weyl element to increase: $\ell(s_{i_\lambda}w_{\lambda-1}) = \ell(w_{\lambda-1}) + 1$. Since this holds for all $\lambda$, the word $w_t = s_{i_t}\cdots s_{i_1}$ is a reduced expression.

    The second part, $\tilde{\alpha}_\lambda \notin \Phi_J^+$, relates to the definition of $W^J$. Observe that, given the definition $w_t = s_{i_t}\cdots s_{i_1}$, the inversion set of $w_t$ corresponds precisely to the set of generated roots:
    \[
    \mathcal{I}(w_t) = \{\tilde{\alpha}_1, \dots, \tilde{\alpha}_t\}.
    \]
    We have shown that all these roots are in $\Phi^+ \setminus \Phi_J^+$. Therefore, $\mathcal{I}(w_t) \subset \Phi^+ \setminus \Phi_J^+$, which by definition implies that $w_t \in W^J$.

    \medskip
    \noindent\textit{From the Weyl group to the game.}
    Conversely, let $w \in W^J$ and let $w = s_{i_t}\cdots s_{i_1}$ be one of its reduced expressions. For each $\lambda \in \{1, \dots, t\}$, we define $w_{\lambda-1} = s_{i_{\lambda-1}}\cdots s_{i_1}$.

    The fact that the expression is reduced implies that the root $\tilde{\alpha}_\lambda := w_{\lambda-1}^{-1}(\alpha_{i_\lambda})$ is positive for all $\lambda$. Furthermore, since $w \in W^J$, its inverse inversion set, $\mathcal{I}(w^{-1}) = \{\tilde{\alpha}_1, \dots, \tilde{\alpha}_t\}$, is contained in $\Phi^+ \setminus \Phi_J^+$. Therefore, for each $\lambda$, we have:
    \[
    \tilde{\alpha}_\lambda \in \Phi^+ \setminus \Phi_J^+.
    \]
    This implies that the corresponding co-root, $w_{\lambda-1}^{-1}(\alpha_{i_\lambda}^\vee)$, is in $(\Phi^\vee)^+ \setminus (\Phi_J^\vee)^+$. Expanding this co-root in the basis of simple co-roots:
    \[
    w_{\lambda-1}^{-1}(\alpha_{i_\lambda}^\vee) = \sum_{j=1}^n k_j \alpha_j^\vee, \quad \text{with } k_j \in \mathbb{Z}_{\ge 0},
    \]
    the condition of non-membership in $(\Phi_J^\vee)^+$ assures us that at least one coefficient $k_{j_0}$ with $j_0 \in I$ must be strictly positive.

    Now, we verify the validity of the move by calculating the integer $K_\lambda$:
    \[
    K_\lambda := -\langle v_{\lambda-1}, \alpha_{i_\lambda}^\vee \rangle = -\langle \beta, w_{\lambda-1}^{-1}(\alpha_{i_\lambda}^\vee) \rangle.
    \]
    Using the expansion of the co-root we just established, we find the same expression for $K_\lambda$ as in the previous implication:
    \[
    K_\lambda = \sum_{j \in I} k_j.
    \]
    Since we know that all $k_j$ are non-negative and at least one of them with index in $I$ is strictly positive, the sum $K_\lambda = \sum_{j \in I} k_j$ is a strictly positive integer.

    Given that $K_\lambda > 0$, the validity condition $\langle v_{\lambda-1}, \alpha_{i_\lambda}^\vee \rangle = -K_\lambda < 0$ holds. This inequality confirms that the move at vertex $i_\lambda$ is valid. Since this argument applies for all $\lambda$ from $1$ to $t$, the sequence of reflections $(i_1, \dots, i_t)$ corresponds to a valid sequence of moves in the modified game.

    \medskip
    \noindent\textit{Bijection between configurations and $W^J$.}
    The previous paragraphs show that a reachable configuration after $m$ moves corresponds uniquely to the element $w_m \in W^J$, and that every element of $W^J$ is realised by some reduced word, hence by some reachable configuration. It remains to check that two different sequences of moves giving the same configuration necessarily correspond to the same reduced expression (and therefore to the same element of $W^J$). 

    Observe first that the Modified Kostant Game possesses a unique final configuration. Indeed, the quotient $W/W_J$ contains a unique longest element $w_0^J$. Any maximal sequence of valid moves must yield a reduced expression for $w_0^J$, and the corresponding final state is $w_0^J(\beta)$. Consequently the final configuration is unique.

    Now suppose that $(k_1,\dots,k_m)$ and $(k_1',\dots,k_{m'}')$ are two sequences of valid moves leading to the same configuration $c$. Extend both sequences arbitrarily to maximal sequences (by adding moves until no sad vertex remains). Because the game always terminates, we obtain two sequences
    \[
    (k_1,\dots,k_m,l_1,\dots,l_p),\qquad (k_1',\dots,k_{m'}',l_1,\dots,l_p)
    \]
    that lead to the unique final configuration. Both extended sequences correspond to reduced words for $w_0^J$. Hence the Weyl group elements they represent are equal:
    \[
    s_{l_p}\cdots s_{l_1}s_{k_m}\cdots s_{k_1} = s_{l_p}\cdots s_{l_1}s_{k_{m'}'}\cdots s_{k_1'}.
    \]
    Canceling the common left factor $s_{l_p}\cdots s_{l_1}$, we obtain
    \[
    s_{k_m}\cdots s_{k_1} = s_{k_{m'}'}\cdots s_{k_1'}.
    \]
    Thus the two sequences correspond to the same element of $W^J$, which shows that the map from reachable configurations to $W^J$ is injective. Together with the surjectivity already established, this map is a bijection.
\end{proof}

\begin{example}[The Game on $A_2$ with both vertices modified]
	Consider the Dynkin diagram of type $A_2$. The Weyl group is the symmetric group $S_3$, generated by the simple reflections $s_1, s_2$ with the braid relation $s_1s_2s_1 = s_2s_1s_2$.
	
	We modify both vertices, i.e. $I = \{1,2\}$, so $J = \emptyset$ and $W^J = W(A_2)$. The aggregate source vector is $\beta = \beta_1 + \beta_2$, and the Cartan matrix of $A_2$ is 
	\[
	A = \begin{pmatrix} 2 & -1 \\ -1 & 2 \end{pmatrix}.
	\]
	
	The algebraic simulation proceeds as follows (we record the state $v_m = w_m(\beta)$ and the corresponding chip configuration $c^{(m)} = v_m - \beta$):
	\begin{itemize}
		\item \textbf{Step 0:} $w_0 = \mathrm{id}$, $v_0 = \beta$, $c^{(0)} = (0,0)$. 
		$\langle v_0, \alpha_1^\vee \rangle = -1 < 0$, so vertex $1$ is sad.
		\item \textbf{Step 1:} Fire vertex $1$. $w_1 = s_1$, $v_1 = \beta + \alpha_1$, $c^{(1)} = (1,0)$.
		\item \textbf{Step 2:} $\langle v_1, \alpha_2^\vee \rangle = -2 < 0$; fire vertex $2$. $w_2 = s_2s_1$, $v_2 = \alpha_1 + 2\alpha_2 + \beta$, $c^{(2)} = (1,2)$.
		\item \textbf{Step 3:} $\langle v_2, \alpha_1^\vee \rangle = -1 < 0$; fire vertex $1$. $w_3 = s_1s_2s_1$, $v_3 = 2\alpha_1 + 2\alpha_2 + \beta$, $c^{(3)} = (2,2)$. The game ends.
	\end{itemize}
	
	The word $w = s_1s_2s_1$ is a reduced expression of the longest element of $W(A_2)$. Its inversion set is $\mathcal{I}(w) = \{\alpha_1, \alpha_1+\alpha_2, \alpha_2\}$, the three positive roots of $A_2$. The configurations visited during the game are exactly the states that appear in the algebraic construction.
	
\begin{figure}[H]
\centering
\begin{tikzpicture}[scale=1.3]

  \node[draw,fill=black,shape=circle,scale=0.5] (a) at (0,0.5) {$\color{white}{1}$};
  \node[draw,fill=white,shape=circle,scale=0.5] (b) at (0,-0.5) {\phantom{$0$}};
  \node[draw,fill=white,shape=circle,scale=0.5] (c) at (1,-0.5) {\phantom{$0$}};
  \node[draw,fill=black,shape=circle,scale=0.5] (d) at (1,0.5) {$\color{white}{1}$};
  \draw (a) -- (b) -- (c) -- (d);

  \draw [->] (1.2, 0.2) -- (2.3, 1.2);
  \draw [->] (1.2, -0.2) -- (2.3, -1.2);

  \node[draw,fill=black,shape=circle,scale=0.5] (a) at (2.5,2.5) {$\color{white}{1}$};
  \node[draw,fill=white,shape=circle,scale=0.5] (b) at (2.5,1.5) {$1$};
  \node[draw,fill=white,shape=circle,scale=0.5] (c) at (3.5,1.5) {\phantom{$0$}};
  \node[draw,fill=black,shape=circle,scale=0.5] (d) at (3.5,2.5) {$\color{white}{1}$};
  \draw (a) -- (b) -- (c) -- (d);

  \draw [->] (3.7, 2.0) -- (4.8, 2.0);

  \node[draw,fill=black,shape=circle,scale=0.5] (a) at (2.5,-1.5) {$\color{white}{1}$};
  \node[draw,fill=white,shape=circle,scale=0.5] (b) at (2.5,-2.5) {\phantom{$0$}};
  \node[draw,fill=white,shape=circle,scale=0.5] (c) at (3.5,-2.5) {$1$};
  \node[draw,fill=black,shape=circle,scale=0.5] (d) at (3.5,-1.5) {$\color{white}{1}$};
  \draw (a) -- (b) -- (c) -- (d);

  \draw [->] (3.7, -2.0) -- (4.8, -2.0);

  \node[draw,fill=black,shape=circle,scale=0.5] (a) at (5,2.5) {$\color{white}{1}$};
  \node[draw,fill=white,shape=circle,scale=0.5] (b) at (5,1.5) {$1$};
  \node[draw,fill=white,shape=circle,scale=0.5] (c) at (6,1.5) {$2$};
  \node[draw,fill=black,shape=circle,scale=0.5] (d) at (6,2.5) {$\color{white}{1}$};
  \draw (a) -- (b) -- (c) -- (d);

  \draw [->] (6.2, 1.2) -- (7.3, 0.2);

  \node[draw,fill=black,shape=circle,scale=0.5] (a) at (5,-1.5) {$\color{white}{1}$};
  \node[draw,fill=white,shape=circle,scale=0.5] (b) at (5,-2.5) {$2$};
  \node[draw,fill=white,shape=circle,scale=0.5] (c) at (6,-2.5) {$1$};
  \node[draw,fill=black,shape=circle,scale=0.5] (d) at (6,-1.5) {$\color{white}{1}$};
  \draw (a) -- (b) -- (c) -- (d);

  \draw [->] (6.2, -1.2) -- (7.3, -0.2);

  \node[draw,fill=black,shape=circle,scale=0.5] (a) at (7.5,0.5) {$\color{white}{1}$};
  \node[draw,fill=white,shape=circle,scale=0.5] (b) at (7.5,-0.5) {$2$};
  \node[draw,fill=white,shape=circle,scale=0.5] (c) at (8.5,-0.5) {$2$};
  \node[draw,fill=black,shape=circle,scale=0.5] (d) at (8.5,0.5) {$\color{white}{1}$};
  \draw (a) -- (b) -- (c) -- (d);

\end{tikzpicture}
\caption{The Modified Kostant Game on $A_2$ altering two vertices.}
\label{fig:A2mod2ver}
\end{figure}
\end{example}


\subsection{Root Counting Identity}

In previous work \cite{CaviedesCastro2022}, it was shown that for a single active vertex ($I=\{j\}$), the total number of chips in the final configuration of the Modified Kostant Game equals the sum of the $j$-th coefficients of the positive roots outside the corresponding maximal parabolic subgroup. Using the bijection established in Theorem \ref{thm:bijection}, we now generalize this result to an arbitrary set of modified vertices $I \subseteq S$, extending the identity to any parabolic quotient $W/W_J$.

Let $\mathbf{c} = (c_1, \dots, c_n)$ be a configuration in the game. We define the total number of chips on the diagram as the $L^1$-norm of the configuration vector: $|\mathbf{c}| := \sum_{v=1}^n c_v$. 

For any coroot $\gamma^\vee \in \Phi^\vee$ with simple expansion $\gamma^\vee = \sum_{j=1}^n k_j \alpha_j^\vee$, we define its \textbf{$I$-height} as the sum of its coefficients corresponding to the modified set $I$:
\[
\mathrm{ht}_I(\gamma^\vee) := \sum_{j \in I} k_j .
\]

\begin{theorem}[Root Counting Identity]\label{thm:root_counting_identity}
	Let $I \subseteq S$ be an arbitrary subset of modified vertices, and $J = S \setminus I$. The Modified Kostant Game terminates in a unique final configuration $\mathbf{c}_{final}^{(I)}$. Moreover, the total number of chips in this final state equals the sum of the $I$-heights of all positive coroots that are not in the subsystem generated by $J$:
	\begin{equation}
		|\mathbf{c}_{final}^{(I)}| = \sum_{\gamma^\vee \in (\Phi^\vee)^+ \setminus (\Phi_J^\vee)^+} \mathrm{ht}_I(\gamma^\vee).
	\end{equation}
\end{theorem}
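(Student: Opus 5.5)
Uniqueness of the final configuration will come from Theorem~\ref{thm:bijection}. By that bijection, every maximal sequence of valid moves is a reduced word for an element of $W^J$ that cannot be extended, so the endpoint must be the unique maximal element of $W^J$. I will identify it as the longest element $w_0^J$, characterized as the unique element of $W^J$ with $\ell(w)$ maximal, or equivalently $\mathcal{I}((w_0^J)^{-1}) = \Phi^+\setminus\Phi_J^+$. The point to check carefully is that any $w\in W^J$ with no valid move is already $w_0^J$. I would argue this as follows. If $w \neq w_0^J$, then the inversion set of $w^{-1}$ is a proper subset of $\Phi^+\setminus\Phi_J^+$. For the parabolic quotient there is then a simple $s_i$ with $s_iw\in W^J$ and $\ell(s_iw)>\ell(w)$ (left weak order on $W^J$ is graded with top $w_0^J$, Bj\"orner--Brenti). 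By the "Weyl group to game" direction of Theorem~\ref{thm:bijection}, firing $i$ is then valid. Termination follows because lengths are bounded by $|\Phi^+|$. So $\mathbf{c}_{final}^{(I)} = w_0^J(\beta)-\beta$.

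Next I compute the total chip count by telescoping along a maximal sequence $(i_1,\dots,i_t)$, with $t = |\Phi^+\setminus\Phi_J^+|$. At step $\lambda$ the fired vertex $i_\lambda$ changes only its own coordinate. The increment is $c'_{i_\lambda}-c_{i_\lambda}$, which by the reflection formula equals
\[
v_\lambda - v_{\lambda-1} = s_{i_\lambda}(v_{\lambda-1}) - v_{\lambda-1} = -\langle v_{\lambda-1},\alpha_{i_\lambda}^\vee\rangle\,\alpha_{i_\lambda} = K_\lambda\,\alpha_{i_\lambda}.
\]
So the total number of chips rises by exactly $K_\lambda$, and $|\mathbf{c}_{final}^{(I)}| = \sum_{\lambda=1}^t K_\lambda$. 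The proof of Theorem~\ref{thm:bijection} already showed
\[
K_\lambda = \mathrm{ht}_I\bigl(w_{\lambda-1}^{-1}(\alpha_{i_\lambda}^\vee)\bigr) = \mathrm{ht}_I(\tilde{\alpha}_\lambda^\vee).
\]
Here I use $W$-equivariance of $\alpha\mapsto\alpha^\vee$, so that $w^{-1}(\alpha_i)^\vee = w^{-1}(\alpha_i^\vee)$.

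Finally, the roots $\tilde{\alpha}_1,\dots,\tilde{\alpha}_t$ are pairwise distinct and form the inversion set $\mathcal{I}(w_t^{-1})$ of the reduced word, by the standard enumeration of inversions. That set is $\Phi^+\setminus\Phi_J^+$ for $w_t=w_0^J$. The coroot map is a positivity-preserving bijection sending $\Phi_J$ onto $\Phi_J^\vee$, so $\{\tilde\alpha_\lambda^\vee\}$ runs exactly once over $(\Phi^\vee)^+\setminus(\Phi_J^\vee)^+$, and summing gives the identity.

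The main obstacle is the inversion-set identification $\mathcal{I}((w_0^J)^{-1})=\Phi^+\setminus\Phi_J^+$ together with the claim that a game stuck at $w\in W^J$ forces $w=w_0^J$. For the first, I would use the facts that $w_0 = w_0^J w_{0,J}$ with additive lengths (Proposition~\ref{prop:parabolic_decomp}), that $\ell(w_0)=|\Phi^+|$, and that $\ell(w_{0,J})=|\Phi_J^+|$. This gives $\ell(w_0^J)=|\Phi^+\setminus\Phi_J^+|$. Combined with the inclusion of the inversion set in $\Phi^+\setminus\Phi_J^+$ established in the theorem, equality of cardinalities forces equality of sets.
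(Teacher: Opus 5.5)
Your proposal is correct and follows the paper's proof. Uniqueness comes from Theorem~\ref{thm:bijection} and the longest element $w_0^J$; the chip count telescopes to $\sum_\lambda K_\lambda$ with $K_\lambda=\mathrm{ht}_I\bigl(w_{\lambda-1}^{-1}(\alpha_{i_\lambda}^\vee)\bigr)$; and these coroots are identified with $(\Phi^\vee)^+\setminus(\Phi_J^\vee)^+$. You also justify two facts the paper only asserts: that a game stuck at $w\in W^J$ forces $w=w_0^J$ (since $W^J=[e,w_0^J]$ in left weak order), and the cardinality argument for the inversion set of $w_0^J$.

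One notational correction: with the paper's convention $\mathcal{I}(w)=\Phi^+\cap w^{-1}(-\Phi^+)$, the roots $\tilde\alpha_\lambda$ form $\mathcal{I}(w_t)$, not $\mathcal{I}(w_t^{-1})$, and it is $\mathcal{I}(w_0^J)$, not $\mathcal{I}((w_0^J)^{-1})$, that equals $\Phi^+\setminus\Phi_J^+$. For instance, in $A_2$ with $J=\{s_1\}$ one has $w_0^J=s_1s_2$ but $\mathcal{I}(s_2s_1)=\{\alpha_1,\alpha_1+\alpha_2\}$. So replace $w^{-1}$ by $w$ in those statements, and the argument then goes through unchanged.
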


\begin{proof}
    By Theorem \ref{thm:bijection}, the game is finite and any maximal valid sequence of moves $(i_1, \dots, i_m)$ corresponds to a reduced expression for the longest element $w_0^J \in W^J$. Thus, the game terminates uniquely.
    
    Let us analyze the change in the total number of chips during a single step $\lambda$. At step $\lambda$, the vertex $i_\lambda$ is fired. The new chip count at this vertex is given by the game rule:
    \[
    c_{i_\lambda}^{(\lambda)} = -c_{i_\lambda}^{(\lambda-1)} + \sum_{u \in N(i_\lambda)} n_{i_\lambda, u} c_u^{(\lambda-1)} + \sum_{p \in I} \delta_{i_\lambda p}.
    \]
    The net change in the number of chips at vertex $i_\lambda$ is the difference $\Delta c_{i_\lambda} = c_{i_\lambda}^{(\lambda)} - c_{i_\lambda}^{(\lambda-1)}$. Using the adjacency coefficients $n_{vu} = -A_{uv} = -\langle \alpha_u, \alpha_v^\vee \rangle$ for $u \neq v$ and the fact that $\langle \alpha_v, \alpha_v^\vee \rangle = 2$, we can rewrite this net change algebraically:
    \begin{align*}
        \Delta c_{i_\lambda} 
        &= -2c_{i_\lambda}^{(\lambda-1)} - \sum_{u \neq i_\lambda} \langle \alpha_u, \alpha_{i_\lambda}^\vee \rangle c_u^{(\lambda-1)} + \sum_{p \in I} \delta_{i_\lambda p} \\
        &= -\left\langle \sum_{u=1}^n c_u^{(\lambda-1)} \alpha_u, \alpha_{i_\lambda}^\vee \right\rangle - \langle \beta, \alpha_{i_\lambda}^\vee \rangle \\
        &= -\langle \mathbf{c}^{(\lambda-1)} + \beta, \alpha_{i_\lambda}^\vee \rangle.
    \end{align*}
    From our algebraic model, we know that $v_{\lambda-1} = \mathbf{c}^{(\lambda-1)} + \beta$, where $v_{\lambda-1} = w_{\lambda-1}(\beta)$ and $w_{\lambda-1} = s_{i_{\lambda-1}} \cdots s_{i_1}$ (with $w_0 = \mathrm{id}$). Therefore, the net change in chips is exactly the integer validity condition $K_\lambda$:
    \[
    \Delta c_{i_\lambda} = -\langle v_{\lambda-1}, \alpha_{i_\lambda}^\vee \rangle = K_\lambda.
    \]
    Since the values at all other vertices $u \neq i_\lambda$ remain invariant during step $\lambda$, the total number of chips on the whole diagram increases by exactly $K_\lambda$. Starting from the empty configuration $|\mathbf{c}^{(0)}| = 0$, the final total chip count is the sum over all steps:
    \[
    |\mathbf{c}_{final}^{(I)}| = \sum_{\lambda=1}^m K_\lambda.
    \]
    In the proof of Theorem \ref{thm:bijection}, we established that $K_\lambda = \mathrm{ht}_I\bigl(w_{\lambda-1}^{-1}(\alpha_{i_\lambda}^\vee)\bigr)$. 
    Furthermore, as $(i_1, \dots, i_m)$ is a reduced expression for $w_0^J$, the set of coroots $\{ w_{\lambda-1}^{-1}(\alpha_{i_\lambda}^\vee) \}_{\lambda=1}^m$ is exactly the inversion set of $w_0^J$, which is well‑known to coincide with $(\Phi^\vee)^+ \setminus (\Phi_J^\vee)^+$.
    
    Substituting this into the sum, we obtain:
    \[
    |\mathbf{c}_{final}^{(I)}| = \sum_{\lambda=1}^m \mathrm{ht}_I\bigl(w_{\lambda-1}^{-1}(\alpha_{i_\lambda}^\vee)\bigr) 
    = \sum_{\gamma^\vee \in (\Phi^\vee)^+ \setminus (\Phi_J^\vee)^+} \mathrm{ht}_I(\gamma^\vee),
    \]
    which completes the proof.
\end{proof}

\begin{remark}
    \label{rem:root_counting_special_cases}
    \begin{enumerate}
        \item If $I = \{j\}$ is a single vertex, then $\mathrm{ht}_I(\gamma^\vee)$ is simply the coefficient of $\alpha_j^\vee$ in the simple expansion of $\gamma^\vee$. In this case, Theorem \ref{thm:root_counting_identity} reduces to Theorem~3.14 of \cite{CaviedesCastro2022}, which states that the height of the unique terminal configuration of the Modified Kostant Game on the coroot diagram at vertex $j$ equals the sum of the $j$-th coefficients of all positive roots outside $\Phi_J^+$. Consequently, one recovers the formula $\sum_{\alpha \in \Phi^+} \alpha = \sum_{\alpha_j \in S} h_j \alpha_j$, where $h_j+1$ denotes the height of that terminal configuration.
        \item If $I = S$ (all vertices are modified), then $J = \emptyset$ and $(\Phi_J^\vee)^+ = \emptyset$. The $I$-height of a coroot is just its usual height, and Theorem \ref{thm:root_counting_identity} says that the total number of chips in the final configuration equals the sum of the heights of all positive coroots. In simply‑laced types this number is $|\Phi^+|$ times the height of the highest root.
    \end{enumerate}
\end{remark}

\subsection{Regularity of Reduced Word Languages}

In geometric group theory, a fundamental question is whether the set of reduced words of a Coxeter group forms a regular language. For the minimal length representatives $W^J$, the Modified Kostant Game provides a direct, constructive proof of regularity by naturally defining a Deterministic Finite Automaton (DFA).

Since the number of valid configurations in the game is finite (bounded by the size of $W^J$), we can define a DFA where:
\begin{itemize}
	\item The \textbf{states} are the valid game configurations $c$.
	\item The \textbf{alphabet} is the set of simple reflections $S$.
	\item The \textbf{transitions} are given by the game moves: reading a symbol $s_v$ from state $c$ transitions to $c'$ if vertex $v$ is sad in $c$. If $v$ is happy, the transition goes to a \textit{trap state}.
	\item All valid configurations are \textbf{accepting states}.
\end{itemize}

Because the game strictly builds reduced expressions of $W^J$ without backtracking, the language accepted by this automaton is precisely the language of reduced words in $W^J$.

\begin{theorem}[Regularity of the Coset Representative Language]
	Let $W$ be a finite Weyl group with a set of generators $S = \{s_1, \dots, s_n\}$, and let $W_J$ be a standard parabolic subgroup for some $J \subseteq S$. Let $W^J$ denote the set of minimal length representatives for the left cosets in $W/W_J$. The language $\mathcal{L}(W^J)$, consisting of all reduced words over the alphabet $S$ representing the elements of $W^J$, is a regular language.
\end{theorem}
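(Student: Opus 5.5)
Since $W$ is finite, every reduced word has length at most $\ell(w_0)$, so $\mathcal{L}(W^J)$ is a finite set of words and is therefore trivially regular. I will note this at the outset as a safety net. The proof I intend to give, however, is the constructive one via the Modified Kostant Game with active set $I = S\setminus J$, since that is the point of the section. It proceeds in three steps: build the automaton $\mathcal{A}$ described above, identify its language with the set of game firing sequences, and translate that set into $\mathcal{L}(W^J)$ using Theorem~\ref{thm:bijection}.

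\emph{Step 1: the automaton is a well-defined DFA.} The states are the reachable configurations together with a trap state. The start state is $\mathbf{0}$. From configuration $c$, reading $s_v$ goes to the fired configuration $c'$ if $v$ is sad in $c$, and to the trap state otherwise; the trap state loops to itself on every letter. All non-trap states are accepting. The transition is deterministic because sadness of $v$ and the firing rule depend only on the current vector $c$ and the fixed set $I$, not on the history. The state set is finite because, by Theorem~\ref{thm:bijection}, reachable configurations are in bijection with $W^J$, which is finite.

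\emph{Step 2: the language of $\mathcal{A}$.} By construction, a word $s_{k_1}s_{k_2}\cdots s_{k_m}$ is accepted exactly when $(k_1,\dots,k_m)$ is a sequence of valid moves starting from $\mathbf{0}$. I now use both directions of the proof of Theorem~\ref{thm:bijection}. The forward direction shows that every valid sequence yields a reduced expression $s_{k_m}\cdots s_{k_1}$ of an element of $W^J$. The converse direction shows that for $w\in W^J$, every reduced expression $w=s_{i_t}\cdots s_{i_1}$ gives a valid sequence $(i_1,\dots,i_t)$; it assumed only that $w\in W^J$ and that the expression is reduced, not any particular choice of reduced word. Hence the language accepted by $\mathcal{A}$ is exactly the set of \emph{reversals} of the words in $\mathcal{L}(W^J)$.

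\emph{Step 3: reversal.} The game reads the letters in the order $s_{k_1}$ first, whereas the reduced word is written $s_{k_m}\cdots s_{k_1}$, so I must convert back. Regular languages are closed under reversal: reverse all arrows of $\mathcal{A}$, make the old accepting states initial and the old start state accepting, and determinize by the subset construction. Therefore $\mathcal{L}(W^J)$ itself is regular.

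The main obstacle is Step 2. The paper's informal phrase ``precisely the language of reduced words'' hides two points. The first is the reversal of order handled in Step 3. The second is that the converse in Theorem~\ref{thm:bijection} must genuinely apply to every reduced word of every $w\in W^J$, including the intermediate prefixes. For the prefixes, I will check that each right factor $w_\lambda=s_{i_\lambda}\cdots s_{i_1}$ has inversion set $\{\tilde\alpha_1,\dots,\tilde\alpha_\lambda\}$, which is a subset of that of $w$, so that validity holds step by step. This is exactly the computation of $K_\lambda>0$ in the proof of Theorem~\ref{thm:bijection}, so I expect it to go through without new ideas.
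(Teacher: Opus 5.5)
Your proposal is correct and is essentially the paper's proof: you use the same DFA on reachable configurations plus a trap state, identify accepted words with valid firing sequences, and pass to reduced words of elements of $W^J$ through both directions of Theorem~\ref{thm:bijection}. Your prefix check $\mathcal{I}(s_{i_\lambda}\cdots s_{i_1})=\{\tilde\alpha_1,\dots,\tilde\alpha_\lambda\}\subseteq\mathcal{I}(w)$ is the correct form of the paper's converse step, which the paper misstates as $\mathcal{I}(w^{-1})$. Your reversal step is a genuine tightening rather than a different route: the paper sidesteps it only through the convention, made explicit after the proof and in the $A_2$ example, that reading $s_{i_1}\cdots s_{i_t}$ left to right builds $s_{i_t}\cdots s_{i_1}$, and your opening remark that $\mathcal{L}(W^J)$ is finite, hence trivially regular, is also correct.
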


\begin{proof}
	The proof is constructive. We will prove the theorem by constructing a Deterministic Finite Automaton (DFA), denoted as $M$, which recognizes precisely the language $\mathcal{L}(W^J)$. The construction relies on the bijection between the valid move sequences of the Modified Kostant Game and the words in $\mathcal{L}(W^J)$, established in Theorem \ref{thm:bijection}.
	
	We define the DFA $M = (Q, \Sigma, \delta, q_0, F)$ as follows:
	
	\begin{itemize}
		\item \textbf{States ($Q$):} The set of states is the set of all \textbf{configurations} $c$ reachable in the Modified Kostant Game (associated with $W/W_J$), plus a non-accepting ``trap'' (or sink) state, $q_{\text{trap}}$.
		$$ Q = \{ c_w \mid w \text{ is a valid sequence of moves} \} \cup \{q_{\text{trap}}\} $$
		Where $c_w$ is the configuration reached after the sequence $w$. Since $W^J$ is finite, the number of valid sequences is finite, and thus $Q$ is a finite set.
		
		\item \textbf{Alphabet ($\Sigma$):} The alphabet of the automaton is the set of simple generators of $W$.
		$$ \Sigma = S = \{s_1, \dots, s_n\} $$
		
		\item \textbf{Initial State ($q_0$):} The initial state is the game configuration before any move, $c_{\varepsilon}$, corresponding to the empty word $\varepsilon$.
		
		\item \textbf{Transition Function ($\delta$):} The transition function $\delta: Q \times \Sigma \to Q$ is defined directly from the game rules. For a valid configuration $c \in Q \setminus \{q_{\text{trap}}\}$ and a generator $s_i \in \Sigma$:
		$$ \delta(c, s_i) =
		\begin{cases}
			c' & \text{if vertex } i \text{ is sad in } c \text{ (valid move)}, \\
			q_{\text{trap}} & \text{if vertex } i \text{ is not sad in } c \text{ (invalid move)}.
		\end{cases}
		$$
		Here, $c'$ denotes the new configuration obtained after firing vertex $i$. For the trap state, the transition is absorbing: $\delta(q_{\text{trap}}, s_i) = q_{\text{trap}}$ for all $s_i \in \Sigma$.
		
		\item \textbf{Accepting States ($F$):} A configuration state $c_w$ is an accepting state if the word $w$ leading to it is a reduced word for an element in $W^J$. By Theorem \ref{thm:bijection}, \emph{every} sequence of valid moves $w$ produces an element of $W^J$. Therefore, any reachable valid configuration is an accepting state:
		$$F = Q \setminus \{q_{\text{trap}}\}.$$
	\end{itemize}
	
	Now, we must demonstrate that the language accepted by this automaton, $L(M)$, is exactly $\mathcal{L}(W^J)$.
	
	($\supseteq$) Let $w \in \mathcal{L}(W^J)$. By definition, $w$ is a reduced word for an element in $W^J$. By Theorem \ref{thm:bijection}, $w$ must correspond to a sequence of valid moves in the Modified Kostant Game. By the definition of $\delta$, processing the word $w$ from $q_0 = c_{\varepsilon}$ never enters $q_{\text{trap}}$ and ends in $c_w \in F$. Hence $M$ accepts $w$, so $\mathcal{L}(W^J) \subseteq L(M)$.
	
	($\subseteq$) Let $w$ be accepted by $M$. Then the sequence of transitions for $w$ starting from $q_0$ avoids $q_{\text{trap}}$ and ends in some $c_w \in F$. Thus each step was a valid move, and by Theorem \ref{thm:bijection}, $w$ is a reduced word for an element of $W^J$. Consequently, $w \in \mathcal{L}(W^J)$, so $L(M) \subseteq \mathcal{L}(W^J)$.
	
	Both inclusions give $L(M) = \mathcal{L}(W^J)$. Since we have explicitly constructed a DFA recognizing $\mathcal{L}(W^J)$, the language is regular by Kleene's theorem (see, e.g., \cite{HopcroftUllman1979}).
\end{proof}

As an immediate consequence, taking $J = \emptyset$ (i.e., $I = S$) we recover the well‑known regularity of the full reduced word language of a finite Weyl group.

\begin{corollary}[Recovery of the classical result]
	If all vertices are modified ($I=S$, $J=\emptyset$), then $W^J = W$. The DFA constructed above recognizes the language $\mathcal{L}(W)$ of all reduced words of $W$, recovering the result of Björner and Brenti \cite[\S 4.8]{BjornerBrenti2005} via a purely combinatorial construction.
\end{corollary}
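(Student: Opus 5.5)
The plan is to specialize the preceding regularity theorem to $J=\emptyset$, i.e.\ $I=S$. I will first check that the parabolic data degenerate correctly. With $J=\emptyset$ the parabolic subgroup $W_J$ is trivial, so every coset $wW_J=\{w\}$ is a singleton and its minimal-length representative is $w$ itself. Equivalently, from the inversion-set definition, $\Phi_J^+=\Phi^+\cap\operatorname{Span}_{\mathbb{R}}(\emptyset)=\emptyset$, so the condition $\mathcal{I}(w)\subseteq\Phi^+\setminus\Phi_J^+=\Phi^+$ is vacuous. The right-descent characterization gives the same conclusion trivially. Hence $W^J=W$, and $\mathcal{L}(W^J)$ is exactly the language $\mathcal{L}(W)$ of all reduced words of $W$.

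Next I will note that the DFA $M$ from the proof of the regularity theorem, built from the Modified Kostant Game with active set $I=S$, recognizes $\mathcal{L}(W^J)=\mathcal{L}(W)$. Its states are the reachable configurations together with the trap state. By Theorem~\ref{thm:bijection} these configurations are in bijection with $W$, so $M$ has exactly $|W|+1$ states. The transitions are the sadness test
\[
c_v<\tfrac12\Bigl(\sum_{u\in N(v)}n_{v,u}c_u+1\Bigr),
\]
since every vertex carries a source. This makes the construction explicitly combinatorial: only chip counts and the Dynkin diagram enter, and no Coxeter-theoretic machinery such as root reflection tables is needed at run time.

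The only point needing care is the consistency check in the case $I=S$. There, the validity number computed in Theorem~\ref{thm:bijection} becomes $K_\lambda=\mathrm{ht}\bigl(w_{\lambda-1}^{-1}(\alpha_{i_\lambda}^\vee)\bigr)$, the ordinary height, and it is positive exactly when the coroot is positive. So the game accepts precisely the words satisfying the classical length-increasing criterion $\ell(s_{i_\lambda}w_{\lambda-1})>\ell(w_{\lambda-1})$ at every step. This confirms independently that the accepted language is all reduced words. Combined with Kleene's theorem, this recovers the regularity of $\mathcal{L}(W)$ stated in \cite[\S 4.8]{BjornerBrenti2005}.
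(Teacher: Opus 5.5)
Your proposal is correct and takes the same route as the paper, which obtains the corollary simply by specializing the regularity theorem to $I=S$, $J=\emptyset$. Your extra observation is a nice direct confirmation: $K_\lambda$ becomes the ordinary height of $w_{\lambda-1}^{-1}(\alpha_{i_\lambda}^\vee)$, so validity of a move is exactly the length-increasing condition. One point is worth making explicit: the automaton reads $s_{i_1}\cdots s_{i_t}$ while the element built is $s_{i_t}\cdots s_{i_1}$, which is harmless here because the set of all reduced words of $W$ is closed under reversal.
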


The previous proof builds the automaton $M$ with game configurations as states. By Theorem \ref{thm:bijection} there is a bijection between valid sequences of moves and elements of $W^J$. This induces an isomorphic automaton $M'$ whose states are the elements of $W^J$ themselves. Recall that the sequence of moves $(i_1, \dots, i_t)$ corresponds to the Weyl group element $w = s_{i_t} \cdots s_{i_1}$. When we read the word $s_{i_1} s_{i_2} \cdots s_{i_t}$ from left to right, we build the element by successive left multiplications:
\[
e \xrightarrow{s_{i_1}} s_{i_1} \xrightarrow{s_{i_2}} s_{i_2}s_{i_1} \xrightarrow{s_{i_3}} \cdots \xrightarrow{s_{i_t}} s_{i_t}\cdots s_{i_1} = w .
\]
Therefore, in the automaton $M'$ the transition labelled $s_i$ from a state $w$ goes to $s_i w$, and this transition is valid exactly when $\ell(s_i w) > \ell(w)$ (which guarantees that the word remains reduced) \emph{and} the new state $s_i w$ still belongs to $W^J$. The initial state is $e$, and all states are accepting.

\begin{example}[DFA for $\mathcal{L}(W^J)$ in type $A_2$ with $J = \{s_1\}$]
	Consider $W = W(A_2)$, generated by $S = \{s_1, s_2\}$ with the relation $s_1s_2s_1 = s_2s_1s_2$. Let $J = \{s_1\}$, so the parabolic subgroup is $W_J = \{e, s_1\}$. The active set in the game is $I = S \setminus J = \{s_2\}$.
	
	The set of minimal length left coset representatives is
	\[
	W^J = \{ e,\; s_2,\; s_1s_2 \},
	\]
	and the corresponding reduced words are $\varepsilon$, $s_2$, and $s_2 s_1$ (because reading $s_2$ then $s_1$ from left to right produces the element $s_1 s_2$).
	
	We construct the DFA $M' = (Q, \Sigma, \delta, q_0, F)$ with states $Q = W^J \cup \{q_{\text{trap}}\}$, alphabet $\Sigma = \{s_1, s_2\}$, initial state $q_0 = e$, and accepting states $F = W^J$. The transition function $\delta$ is:
	\[
	\begin{aligned}
	\delta(e, s_1) &= q_{\text{trap}}, \quad & \delta(e, s_2) &= s_2, \\
	\delta(s_2, s_1) &= s_1s_2, \quad & \delta(s_2, s_2) &= q_{\text{trap}}, \\
	\delta(s_1s_2, s_1) &= q_{\text{trap}}, \quad & \delta(s_1s_2, s_2) &= q_{\text{trap}},
	\end{aligned}
	\]
	and $\delta(q_{\text{trap}}, s_i) = q_{\text{trap}}$ for $i=1,2$.
	
	The automaton accepts exactly $\varepsilon$, $s_2$, and $s_2 s_1$, i.e., the language $\mathcal{L}(W^J)$.
	
	\begin{figure}[H]
		\centering
		\begin{tikzpicture}[
			>=Stealth,
			node distance=2cm and 2cm,
			auto,
			state/.style={circle, draw, thick, minimum size=1.2cm},
			accepting state/.style={state, double, double distance=1.5pt},
			initial text={},
			every edge/.style={draw, ->, thick}
			]
			\node[accepting state, initial] (e) {$e$};
			\node[accepting state] (s2) [right=of e] {$s_2$};
			\node[accepting state] (s1s2) [below=of s2] {$s_1s_2$};
			\node[state, ellipse, draw=black!70] (trap) [below=of e, xshift=-1cm] {$q_{\text{trap}}$};
			
			\path
			(e) edge node[above] {$s_2$} (s2)
			(s2) edge node[right] {$s_1$} (s1s2)
			(e) edge[dashed, red, thick, bend left=20] node[left] {$s_1$} (trap)
			(s2) edge[dashed, red, thick, bend right=20] node[below] {$s_2$} (trap)
			(s1s2) edge[dashed, red, thick, bend right=30] node[below left] {$s_1,s_2$} (trap);
			
			\path (trap) edge [loop left] node {$s_1,s_2$} (trap);
			
		\end{tikzpicture}
		\caption{DFA for $\mathcal{L}(W^{\{s_1\}}) = \{\varepsilon,\; s_2,\; s_2 s_1\}$.}
		\label{fig:dfa_a2}
	\end{figure}
\end{example}

\subsection{Construction of Standard Young Tableaux}

For the symmetric group $S_n$ (the Weyl group of type $A_{n-1}$), the parabolic quotient $S_n / (S_k \times S_{n-k})$ plays a central role in the geometry of Grassmannians: its minimal length representatives, called \textbf{Grassmannian permutations}, parameterize the Schubert cells of the Grassmannian $G(k,n)$ \cite{Winkel1996}. 

A Grassmannian permutation is a permutation $w \in S_n$ whose essential descents are all $\le k$; equivalently, it is the unique element of minimal length in its left coset of $S_n/(S_k \times S_{n-k})$. Each such permutation determines a Young diagram $\lambda(w)$ fitting inside a $k \times (n-k)$ rectangle, and there is a well‑known bijection between the reduced words of $w$ and the Standard Young Tableaux (SYT) of shape $\lambda(w)$, due to Winkel \cite{Winkel1996}.

The Modified Kostant Game with a single source at vertex $k$ provides a visual, dynamical mechanism for this bijection. As the game is played, each firing sequence builds a Standard Young Tableau step by step.

From Theorem \ref{thm:bijection}, the valid move sequences correspond bijectively to reduced words of elements in $W^J$ (with $J = S \setminus \{k\}$). Hence, for a given Grassmannian permutation $w$, every valid game sequence that produces $w$ (and therefore ends in the same final configuration) will construct one of the SYT of shape $\lambda(w)$. Different sequences producing the same $w$ give different SYT, and all SYT of shape $\lambda(w)$ arise in this way.

The filling mechanism is completely determined by the vertex that fires at each step. Recall that the Dynkin diagram of $A_{n-1}$ has vertices labelled $1,2,\dots,n-1$, with the modified vertex being $k$. A Young diagram cell is identified by its row $r$ and column $c$ (rows count from top to bottom, columns from left to right). The \textbf{content} of a cell $(r,c)$ is defined as $c - r$.

The rule to place the next number $j$ in the tableau after the $j$-th move (which fires vertex $i_j$) is:
\begin{equation}\label{eq:content_rule}
    i_j - k = c - r.
\end{equation}
This means that the difference between the vertex fired and the modified vertex determines the diagonal (the content) of the cell that will receive the number $j$. Concretely:
\begin{itemize}
    \item If $i_j = k$ (the modified vertex fires), then $c - r = 0$, so the number is placed on the \textbf{main diagonal} (cells with $c = r$).
    \item If $i_j < k$ (a vertex to the left of $k$ fires), then $c - r < 0$, so $r > c$ and the number goes \textbf{below} the main diagonal (in a lower row).
    \item If $i_j > k$ (a vertex to the right of $k$ fires), then $c - r > 0$, so $c > r$ and the number goes \textbf{above} the main diagonal (in a column to the right).
\end{itemize}

At step $j$, among all empty cells in the current Young shape that satisfy \eqref{eq:content_rule}, the number $j$ is placed in the unique cell that keeps the partial filling a standard Young tableau (i.e., rows strictly increasing and columns strictly increasing). By the properties of Grassmannian permutations, this cell is exactly the addable corner of the shape $\lambda(w_{j-1})$ determined by the left multiplication by $s_{i_j}$.

\begin{example}[The case $S_4 / (S_2 \times S_2)$]
	Consider $n=4$ and $k=2$, which corresponds to the game on $A_3$ with modified vertex $2$. Take the Grassmannian permutation $w$ with reduced word $s_2 s_1 s_3 s_2$. Its associated Young shape is $\lambda = (2,2)$, which fits in the $2 \times 2$ rectangle.
	
	This $w$ admits exactly two reduced words: $s_2 s_1 s_3 s_2$ and $s_2 s_3 s_1 s_2$, corresponding to the two possible orders of the commuting reflections $s_1$ and $s_3$. By Theorem \ref{thm:bijection}, each reduced word yields a valid game sequence, and both lead to the same final configuration $c = (1,2,1)$.
	
	Figure \ref{fig:conjetura_visual_1} shows the two games and the step‑by‑step filling of the two Standard Young Tableaux of shape $(2,2)$:
	\[
	\begin{ytableau}
    1 & 3 \\
    2 & 4
    \end{ytableau}
    \qquad
    \begin{ytableau}
    1 & 2 \\
    3 & 4
    \end{ytableau}
    \]
	The filling rule $i_j - 2 = c - r$ assigns the numbers exactly to the diagonals indicated by the vertex fired: fire $2$ (content $0$) $\to$ main diagonal, fire $1$ (content $-1$) $\to$ one step below diagonal, fire $3$ (content $+1$) $\to$ one step above diagonal. The order in which $s_1$ and $s_3$ are fired distinguishes the two tableaux.
\end{example}

\begin{figure}[H]
	\centering
	\begin{tikzpicture}[
		vtx/.style={circle, draw, fill=white, thick, minimum size=18pt, font=\small},
		src/.style={vtx, fill=black, text=white},
		arrow/.style={-Stealth, very thick},
		game/.style={scale=0.8, transform shape},
		tableau/.style={scale=0.9, transform shape} 
		]
		
		\node (title1) at (1.5,0) {\bfseries Game for $s_2s_3s_1s_2$};
    \node (title2) at (10.5,0) {\bfseries Game for $s_2s_1s_3s_2$};
		
		\node (g1_0) at (0, -1.5) {
			\begin{tikzpicture}[game]
				\node[vtx] (v1) at (0,0) {0}; \node[vtx] (v2) at (1.5,0) {0}; \node[vtx] (v3) at (3,0) {0};
				\node[src] (s2) at (1.5,1) {1}; \draw (v1)--(v2)--(v3); \draw (s2)--(v2);
			\end{tikzpicture}
		};
		\node[tableau, right=0.2cm of g1_0] (t1_0) {(Empty Tableau)};
		
		\node (g1_1) at (0, -4) {
			\begin{tikzpicture}[game]
				\node[vtx] (v1) at (0,0) {0}; \node[vtx] (v2) at (1.5,0) {1}; \node[vtx] (v3) at (3,0) {0};
				\node[src] (s2) at (1.5,1) {1}; \draw (v1)--(v2)--(v3); \draw (s2)--(v2);
			\end{tikzpicture}
		};
		\node[tableau, right=0.2cm of g1_1] (t1_1) {
			\begin{ytableau} 1 \end{ytableau}
		};
		
		\node (g1_2) at (0, -6.5) {
			\begin{tikzpicture}[game]
				\node[vtx] (v1) at (0,0) {1}; \node[vtx] (v2) at (1.5,0) {1}; \node[vtx] (v3) at (3,0) {0};
				\node[src] (s2) at (1.5,1) {1}; \draw (v1)--(v2)--(v3); \draw (s2)--(v2);
			\end{tikzpicture}
		};
		\node[tableau, right=0.2cm of g1_2] (t1_2) {
			\begin{ytableau} 1 \\ 2 \end{ytableau}
		};
		
		\node (g1_3) at (0, -9) {
			\begin{tikzpicture}[game]
				\node[vtx] (v1) at (0,0) {1}; \node[vtx] (v2) at (1.5,0) {1}; \node[vtx] (v3) at (3,0) {1};
				\node[src] (s2) at (1.5,1) {1}; \draw (v1)--(v2)--(v3); \draw (s2)--(v2);
			\end{tikzpicture}
		};
		\node[tableau, right=0.2cm of g1_3] (t1_3) {
			\begin{ytableau} 1 & 3 \\ 2 \end{ytableau}
		};
		
		\node (g1_4) at (0, -11.5) {
			\begin{tikzpicture}[game]
				\node[vtx] (v1) at (0,0) {1}; \node[vtx] (v2) at (1.5,0) {2}; \node[vtx] (v3) at (3,0) {1};
				\node[src] (s2) at (1.5,1) {1}; \draw (v1)--(v2)--(v3); \draw (s2)--(v2);
			\end{tikzpicture}
		};
		\node[tableau, right=0.2cm of g1_4] (t1_4) {
			\begin{ytableau} 1 & 3 \\ 2 & 4 \end{ytableau}
		};
		\node[below=0.1cm of g1_4, align=center] {\small Final Configuration \\ $c=(1,2,1)$};
		
		\node (g2_0) at (9, -1.5) {
			\begin{tikzpicture}[game]
				\node[vtx] (v1) at (0,0) {0}; \node[vtx] (v2) at (1.5,0) {0}; \node[vtx] (v3) at (3,0) {0};
				\node[src] (s2) at (1.5,1) {1}; \draw (v1)--(v2)--(v3); \draw (s2)--(v2);
			\end{tikzpicture}
		};
		\node[tableau, right=0.2cm of g2_0] (t2_0) {(Empty Tableau)};
		
		\node (g2_1) at (9, -4) {
			\begin{tikzpicture}[game]
				\node[vtx] (v1) at (0,0) {0}; \node[vtx] (v2) at (1.5,0) {1}; \node[vtx] (v3) at (3,0) {0};
				\node[src] (s2) at (1.5,1) {1}; \draw (v1)--(v2)--(v3); \draw (s2)--(v2);
			\end{tikzpicture}
		};
		\node[tableau, right=0.2cm of g2_1] (t2_1) {
			\begin{ytableau} 1 \end{ytableau}
		};
		
		\node (g2_2) at (9, -6.5) {
			\begin{tikzpicture}[game]
				\node[vtx] (v1) at (0,0) {0}; \node[vtx] (v2) at (1.5,0) {1}; \node[vtx] (v3) at (3,0) {1};
				\node[src] (s2) at (1.5,1) {1}; \draw (v1)--(v2)--(v3); \draw (s2)--(v2);
			\end{tikzpicture}
		};
		\node[tableau, right=0.2cm of g2_2] (t2_2) {
			\begin{ytableau} 1 & 2 \end{ytableau}
		};
		
		\node (g2_3) at (9, -9) {
			\begin{tikzpicture}[game]
				\node[vtx] (v1) at (0,0) {1}; \node[vtx] (v2) at (1.5,0) {1}; \node[vtx] (v3) at (3,0) {1};
				\node[src] (s2) at (1.5,1) {1}; \draw (v1)--(v2)--(v3); \draw (s2)--(v2);
			\end{tikzpicture}
		};
		\node[tableau, right=0.2cm of g2_3] (t2_3) {
			\begin{ytableau} 1 & 2 \\ 3 \end{ytableau}
		};
		
		\node (g2_4) at (9, -11.5) {
			\begin{tikzpicture}[game]
				\node[vtx] (v1) at (0,0) {1}; \node[vtx] (v2) at (1.5,0) {2}; \node[vtx] (v3) at (3,0) {1};
				\node[src] (s2) at (1.5,1) {1}; \draw (v1)--(v2)--(v3); \draw (s2)--(v2);
			\end{tikzpicture}
		};
		\node[tableau, right=0.2cm of g2_4] (t2_4) {
			\begin{ytableau} 1 & 2 \\ 3 & 4 \end{ytableau}
		};
		\node[below=0.1cm of g2_4, align=center] {\small Final Configuration \\ $c=(1,2,1)$};
		
		\draw[arrow] (g1_0) -- (g1_1); \draw[arrow] (g1_1) -- (g1_2); \draw[arrow] (g1_2) -- (g1_3); \draw[arrow] (g1_3) -- (g1_4);
		\draw[arrow] (g2_0) -- (g2_1); \draw[arrow] (g2_1) -- (g2_2); \draw[arrow] (g2_2) -- (g2_3); \draw[arrow] (g2_3) -- (g2_4);
		
	\end{tikzpicture}
	\caption{Two valid games on $A_3$ modified at $s_2$. Both correspond to different reduced expressions of the element $w=s_2s_1s_3s_2$ and construct the two SYTs of shape $\lambda=(2,2)$.}
	\label{fig:conjetura_visual_1}
\end{figure}

\begin{theorem}
\label{thm:syt_construction}
Let $w \in W^J$ be a Grassmannian permutation for $J = S \setminus \{k\}$, and let $\lambda = \lambda(w)$ be its associated shape. The step-by-step filling mechanism that assigns to each sequence of valid moves $(i_1, i_2, \dots, i_m)$ a placement of the integer $j$ in the unique addable cell $(r,c)$ satisfying $c - r = i_j - k$, constitutes a well-defined bijection onto the set $\text{SYT}(\lambda)$ of Standard Young Tableaux of shape $\lambda$.
\end{theorem}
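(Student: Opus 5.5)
The plan is to make the filling rule explicit by an invariant relating game configurations to Young diagrams, and then to use Theorem~\ref{thm:bijection} to move between move sequences and reduced words.

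\textbf{Invariant.} For the game on $A_{n-1}$ with active set $I=\{k\}$, a reachable configuration $c$ should correspond to a partition $\mu\subseteq k\times(n-k)$ through
\[
c_i=\#\{\text{cells of }\mu\text{ of content } i-k\}.
\]
Conversely, vertex $i$ should be sad in $c$ exactly when $\mu$ has an addable cell of content $i-k$. When that holds, firing $i$ adds precisely that cell.

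I would prove this by induction on the number of moves. In a partition, the diagonal of content $d$ has length $m_d$. Its neighbours $m_{d\pm1}$ satisfy $m_{d-1}+m_{d+1}-m_d\in\{0,1\}$ away from the origin, and the value is $1$ exactly when there is an addable cell on diagonal $d$. At $d=0$ the extra source term $\delta_{i,k}$ supplies the missing $+1$. Since the fired value is $c'_i=c_{i-1}+c_{i+1}+\delta_{ik}-c_i$, firing changes $c_i$ by $+1$ precisely when an addable cell exists on that diagonal. Because we are in type $A$, an addable cell on a given diagonal is unique when it exists. The bounds $1\le i\le n-1$ keep $\mu$ inside the $k\times(n-k)$ rectangle.

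This case analysis on diagonal lengths is the main obstacle, in particular checking the boundary diagonals $d=\pm(\text{rectangle edge})$. I would handle it by writing $\mu$ through its boundary lattice path: each diagonal crosses the path once, and addability at that crossing is the local pattern ``right step followed by down step''.

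\textbf{Well-definedness and the filling rule.} Given the invariant, the cell of content $i_j-k$ that receives $j$ is the unique addable cell of the current shape. Adding it keeps the filling standard, since we always add a corner to a partition. After $m=\ell(w)$ moves the shape is determined by the final configuration. By Theorem~\ref{thm:bijection}, the final configuration is determined by $w$, because configurations are in bijection with $W^J$. I would then take the resulting shape as $\lambda(w)$, and check consistency with the classical definition by noting that both are built by corner additions indexed by the same reduced word, with $|\lambda|=\ell(w)$.

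\textbf{Bijectivity.} Injectivity: the tableau records the content of each entry, so it recovers $i_j=k+(c-r)$ and therefore the whole move sequence, and the sequences map to distinct reduced words. Surjectivity: a standard Young tableau $T$ of shape $\lambda$ is a chain of partitions $\emptyset\subset\mu_1\subset\cdots\subset\lambda$, each step adding one corner of some content $d_j$. By the invariant, vertex $k+d_j$ is sad at step $j$, so the sequence $(k+d_j)_j$ is valid. Its endpoint corresponds to $\lambda$, so by Theorem~\ref{thm:bijection} it yields a reduced word of the same $w$, and its image under the filling map is $T$.
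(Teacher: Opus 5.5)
Your argument is correct in substance, but it takes a different route from the paper.

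\textbf{How the two proofs differ.} The paper inducts on $\ell(w)$ and cites Winkel's bijection for the key fact: a cover $w'\to s_{i_m}w'$ in the left weak order adds to $\lambda(w')$ exactly the addable corner of content $i_m-k$. In the reverse direction the paper only checks $\ell(s_{i_m}w')=\ell(w')+1$, leaving implicit that $s_{i_m}w'$ stays in $W^J$, which validity of the move also requires.

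You instead prove an invariant intrinsic to the chip dynamics. Reachable configurations are exactly the diagonal-length vectors of partitions in the $k\times(n-k)$ rectangle, and vertex $i$ is sad iff there is an addable cell of content $i-k$. This has three payoffs:
\begin{itemize}
\item the ``unique addable cell'' in the filling rule is visibly well defined;
\item validity of the reconstructed moves in the surjectivity step comes for free;
\item you essentially rederive Winkel's correspondence from the game instead of citing it.
\end{itemize}
The price is a small lemma on partitions. There is also one caveat. If you want to match a classical definition of $\lambda(w)$, rather than defining it by the final configuration, your consistency check (``both are built by corner additions indexed by the same reduced word'') is precisely the Winkel fact the paper cites, so it is not avoided there. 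Defining $\lambda(w)$ by the final configuration is legitimate, given how loosely the paper defines it.

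\textbf{A correction to your key lemma.} As written, the claim $m_{d-1}+m_{d+1}-m_d\in\{0,1\}$ is false. For $\mu=(2,2)$ and $d=1$ it equals $2+0-1=1$, yet no cell of content $1$ is addable. The quantity that governs both sadness and the firing increment $c_i'-c_i$ is the second difference
\[
m_{d-1}+m_{d+1}-2m_d+\delta_{d,0}\in\{-1,0,1\}.
\]
It equals $+1$ iff diagonal $d$ has an addable cell, and $-1$ iff it has a removable cell; for instance $\mu=(2)$, $d=1$ gives $-1$. Sadness only asks for this to be positive, so your conclusions survive: sad iff addable, and firing raises $c_i$ by exactly $1$. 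Your lattice-path picture proves exactly this corrected statement, since the boundary crosses each diagonal once and so carries at most one corner, addable or removable. In particular, uniqueness of the addable cell on a diagonal is a property of partitions, not of type $A$ as such.

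\textbf{Boundary check, made precise.} For $\mu$ inside the rectangle, $m_{-k}=m_{n-k}=0$, which matches the convention $c_0=c_n=0$. Moreover, an addable cell has content in $[1-k,\,n-k-1]$ iff adding it keeps $\mu$ inside the rectangle. The only addable cells outside that content range are $(k+1,1)$ and $(1,n-k+1)$.
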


\begin{proof}
The proof proceeds by induction on the length $m = \ell(w)$.

If $m=0$, $w$ is the identity, $\lambda(w)$ is the empty shape, and the empty sequence maps trivially to the empty tableau. If $m=1$, the only valid move must be on the modified vertex, so $i_1 = k$. The shape $\lambda(s_k)$ is a single box $\square = (1,1)$. The content condition yields $c - r = 1 - 1 = 0 = i_1 - k$, so the entry $1$ is placed in $(1,1)$, forming a valid SYT.

Assume the construction is a well-defined bijection for all elements in $W^J$ of length $m-1$. Let $w \in W^J$ with $\ell(w) = m$, and let $(i_1, \dots, i_{m-1}, i_m)$ be a valid game sequence producing $w$.

Denote by $w' = s_{i_{m-1}} \cdots s_{i_1}$ the element obtained after $m-1$ moves. Then $w = s_{i_m} w'$ and $\ell(w) = \ell(w') + 1$, so $w$ covers $w'$ in the left weak Bruhat order. For Grassmannian permutations, this covering relation corresponds exactly to adding one addable corner to the Young diagram; hence $\lambda(w) = \lambda(w') \cup \{(r^*, c^*)\}$ for a uniquely determined cell. The content of that cell is forced by the vertex that fires: by the properties of the Winkel bijection \cite{Winkel1996}, left multiplication by $s_{i_m}$ adds a box whose content is precisely $i_m - k$. Therefore the cell assigned to the integer $m$ by our rule is exactly $(r^*, c^*)$.

Since $m$ is the maximum value placed so far and $(r^*, c^*)$ was an addable corner of $\lambda(w')$, the resulting filling remains a valid SYT.

Now to prove the bijectivity, given $T \in \text{SYT}(\lambda)$, the box containing the maximum entry $m$ is an inner corner of $\lambda$. Removing that box gives an SYT $T'$ of a shape $\mu \subset \lambda$ that also fits in the $k \times (n-k)$ rectangle, hence $\mu = \lambda(w')$ for a unique Grassmannian permutation $w' \in W^J$ of length $m-1$. The content $c_m - r_m$ of the removed box determines the last move: $i_m = k + c_m - r_m$, and because the box was an addable corner of $\mu$, we have $\ell(s_{i_m} w') = \ell(w') + 1$, so $i_m$ is a valid move. By the induction hypothesis, $T'$ corresponds to a unique valid game sequence $(i_1, \dots, i_{m-1})$ for $w'$, and appending $i_m$ yields the desired sequence for $T$.

Thus the filling mechanism is a bijection between valid game sequences for $w$ and SYT of shape $\lambda(w)$.
\end{proof}


\subsection{The Mukai Conjecture and the Kostant game}

The Mukai conjecture asserts that if $M$ is a Fano variety of complex dimension $n$, index $k_0$, and Picard number $b$, then
\[
n \geq b(k_0-1),
\]
with equality if and only if
$M \cong (\mathbb{CP}^{k_0-1})^b$
\cite{mukai}.

The conjecture has been established in several important settings. In particular, it was proved by Casagrande for toric Fano varieties \cite{casagrande2006}, by Pasquier for horospherical varieties \cite{pasquier}, and more recently by Reineke for quiver moduli varieties \cite{Reineke2024}.

For coadjoint orbits, the conjecture was verified by the first author together with Milena Pabiniak and Silvia Sabatini in \cite{CaviedesCastro2022}, using the combinatorics of the Kostant game.
More precisely, let $G$ be a compact Lie group, let $G_{\mathbb C}$ denote its complexification, and let $P \subset G_{\mathbb C}$ be a parabolic subgroup containing a fixed Borel subgroup $B$. Denote by $\Phi$ the corresponding root system and by $\Delta$ the set of simple roots determined by $B$. Let $\Delta_P \subset \Delta$ be the subset corresponding to $P$. Then the Picard number of the coadjoint orbit $G_{\mathbb C}/P$ is given by
\[
|\Delta \smallsetminus \Delta_P|,
\]
its index is
\[
\gcd_{\beta\in \Delta \smallsetminus \Delta_P}
\Biggl(
\sum_{\alpha\in \Phi^+\smallsetminus \Phi_{\Delta_P}^+}
\langle \alpha,\beta^\vee\rangle - 1
\Biggr),
\]
and its complex dimension is
\[
|\Phi^+\smallsetminus \Phi_{\Delta_P}^+|.
\]
Here $\Phi^+$ and $\Phi_{\Delta_P}^+$ denote the sets of positive roots generated by $\Delta$ and $\Delta_P$, respectively. Therefore, verifying the Mukai conjecture for $G_{\mathbb C}/P$ is equivalent to proving the inequality
\[
|\Delta \smallsetminus \Delta_P|\cdot
\Bigl(
\gcd_{\beta\in \Delta \smallsetminus \Delta_P}(n_\beta)-1
\Bigr)
\le
|\Phi^+\smallsetminus \Phi_{\Delta_P}^+|,
\]
where
\[
n_\beta=
\sum_{\alpha\in \Phi^+\smallsetminus \Phi_{\Delta_P}^+}
\langle \alpha,\beta^\vee\rangle.
\]
See \cite{CaviedesCastro2022} for further details.

In this section, we revisit this argument and present a self-contained proof of the above inequality using the combinatorics of the modified Kostant game.
We start by recalling two lemmas that will be used throughout the proof and can be found in \cite{Humphreys1972}.

\begin{lemma}\label{root sum lemma}
Let $\beta$ be a simple root. Then
\[
\sum_{\alpha\in \Phi^+}\langle \alpha, \beta^\vee \rangle=2.
\]
\end{lemma}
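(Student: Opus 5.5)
The plan is to use the standard Weyl vector argument. Set $\rho := \frac12 \sum_{\alpha\in\Phi^+}\alpha$. The claimed identity is equivalent to $\langle \rho, \beta^\vee\rangle = 1$, since by linearity $\sum_{\alpha\in\Phi^+}\langle \alpha,\beta^\vee\rangle = 2\langle \rho,\beta^\vee\rangle$. I will compute $s_\beta(\rho)$ in two ways and compare.

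\textbf{Step 1: $s_\beta$ permutes $\Phi^+\setminus\{\beta\}$.} Let $\alpha\in\Phi^+$ with $\alpha\neq\beta$. Because $\Phi$ is reduced, $\alpha$ is not a multiple of $\beta$. Writing $\alpha=\sum_i c_i\alpha_i$ with all $c_i\ge 0$, some coefficient $c_{i_0}$ with $\alpha_{i_0}\neq\beta$ is therefore strictly positive. By the reflection formula, $s_\beta(\alpha)=\alpha-\langle\alpha,\beta^\vee\rangle\beta$ changes only the coefficient of $\beta$. So $c_{i_0}>0$ survives in $s_\beta(\alpha)$. Since every root has coefficients all of one sign, $s_\beta(\alpha)$ is positive, and it is not $\beta$. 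As $s_\beta$ is an involution on $\Phi$, it restricts to a bijection of $\Phi^+\setminus\{\beta\}$. Also $s_\beta(\beta)=-\beta$.

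\textbf{Step 2: compute $s_\beta(\rho)$.} By Step 1,
\[
s_\beta(\rho)=\tfrac12\Bigl(\sum_{\alpha\in\Phi^+\setminus\{\beta\}}\alpha\Bigr)-\tfrac12\beta=\rho-\beta .
\]
The reflection formula also gives $s_\beta(\rho)=\rho-\langle\rho,\beta^\vee\rangle\beta$. Comparing the two expressions yields $\langle\rho,\beta^\vee\rangle=1$, and therefore $\sum_{\alpha\in\Phi^+}\langle\alpha,\beta^\vee\rangle=2$.

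The only step with real content is Step 1. Its essential input is that every root is a nonnegative or nonpositive combination of simple roots, together with reducedness, which excludes $2\beta\in\Phi$. Both are standard properties of a base, \cite[10.2]{Humphreys1972}. The rest is bookkeeping with linearity of $\langle\cdot,\beta^\vee\rangle$.
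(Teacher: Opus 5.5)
Your proof is correct. The paper gives no argument of its own for this lemma and simply cites \cite{Humphreys1972}, where the identity is obtained by exactly your route: $s_\beta$ permutes $\Phi^+\setminus\{\beta\}$, hence $s_\beta(\rho)=\rho-\beta$, which forces $\langle\rho,\beta^\vee\rangle=1$.
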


\begin{lemma}\label{lemma:string}
Let $\alpha$ and $\beta$ be non-proportional roots. Let $r, q\in
\Z_{\geq 0}$ be the largest integers for which $\beta-r\alpha\in \Phi,
\beta+q\alpha\in \Phi$. Then $\beta+i\alpha\in \Phi$ for all $-r\leq i\leq
q$. The set $\{\beta+i\alpha : i\in \Z\}$ is called the
\textbf{$\alpha$-string through $\beta$}.
\end{lemma}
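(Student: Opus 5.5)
The plan is the classical argument (as in \cite{Humphreys1972}), carried out directly with the reflections $s_i$ and the pairing $\langle\cdot,\cdot^\vee\rangle$ of Section~\ref{sec:prelim}. Since $\Phi$ is finite and $\alpha,\beta$ are non-proportional, the vectors $\beta+i\alpha$ are pairwise distinct, so only finitely many of them are roots. Hence $r$ and $q$ exist; note that $r,q\ge 0$ because $\beta$ itself is a root. The content of the lemma is that the string has no gaps.

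\emph{Step 1 (auxiliary fact).} For non-proportional roots $\gamma,\alpha$:
\[
\langle\gamma,\alpha^\vee\rangle>0 \;\Longrightarrow\; \gamma-\alpha\in\Phi,
\qquad
\langle\gamma,\alpha^\vee\rangle<0 \;\Longrightarrow\; \gamma+\alpha\in\Phi .
\]
To prove the first implication, use that $\langle\gamma,\alpha^\vee\rangle$ and $\langle\alpha,\gamma^\vee\rangle$ are integers of the same sign, and that by Cauchy--Schwarz their product is $4\cos^2\theta\in\{0,1,2,3\}$ (the value $4$ is excluded by non-proportionality). So if $\langle\gamma,\alpha^\vee\rangle>0$, one of the two numbers equals $1$. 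If $\langle\gamma,\alpha^\vee\rangle=1$, then $s_\alpha(\gamma)=\gamma-\alpha\in\Phi$. If $\langle\alpha,\gamma^\vee\rangle=1$, then $s_\gamma(\alpha)=\alpha-\gamma\in\Phi$, hence $\gamma-\alpha\in\Phi$ since $\Phi=-\Phi$. The second implication follows by applying the first to $-\alpha$.

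\emph{Step 2 (contradiction from a gap).} Suppose the string is broken between $-r$ and $q$. Then there are indices $p<s$ in $[-r,q]$ with
\[
\beta+p\alpha\in\Phi,\quad \beta+(p+1)\alpha\notin\Phi,\quad \beta+(s-1)\alpha\notin\Phi,\quad \beta+s\alpha\in\Phi .
\]
Each $\beta+i\alpha$ is non-proportional to $\alpha$, so Step 1 applies.
\begin{itemize}
\item Applied to $\gamma=\beta+p\alpha$: since $\gamma+\alpha\notin\Phi$, we get $\langle\beta+p\alpha,\alpha^\vee\rangle\ge 0$.
\item Applied to $\gamma=\beta+s\alpha$: since $\gamma-\alpha\notin\Phi$, we get $\langle\beta+s\alpha,\alpha^\vee\rangle\le 0$.
\end{itemize}
Subtracting the second inequality from the first gives $(p-s)\langle\alpha,\alpha^\vee\rangle=2(p-s)\ge 0$. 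This contradicts $p<s$, so the string is unbroken, i.e.\ $\beta+i\alpha\in\Phi$ for all $-r\le i\le q$.

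The only genuinely non-formal step is Step 1, specifically the integrality bound $\langle\gamma,\alpha^\vee\rangle\langle\alpha,\gamma^\vee\rangle\le 3$, which forces one factor to be $1$. I would state it explicitly via Cauchy--Schwarz rather than cite it. The remaining steps are bookkeeping with the reflection formula $s_i(\lambda)=\lambda-\langle\lambda,\alpha_i^\vee\rangle\alpha_i$ and the $W$-stability of $\Phi$.
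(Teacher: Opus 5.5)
Your argument is correct and is essentially the classical proof in \cite{Humphreys1972}, which is the source the paper cites instead of giving its own proof. Step 1 (one of $\langle\gamma,\alpha^\vee\rangle,\langle\alpha,\gamma^\vee\rangle$ equals $1$, since their product is at most $3$) followed by the gap argument forcing $2(p-s)\ge 0$ is exactly that route; just note that you use reflections $s_\alpha, s_\gamma$ in arbitrary (not necessarily simple) roots, which is legitimate because $\Phi$ is stable under all of them by the root system axioms.
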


\begin{theorem}
For any subset $\Delta_P \subset \Delta$, the inequality
\begin{equation}\label{strong inequality}
\sum_{\beta\in \Delta \smallsetminus \Delta_P}
\Biggl(
\sum_{\alpha\in \Phi^+\smallsetminus \Phi_{\Delta_P}^+}
\langle \alpha,\beta^\vee\rangle-1
\Biggr)
\leq |\Phi^+\smallsetminus \Phi_{\Delta_P}^+|
\end{equation}
holds. As a consequence,
\[
|\Delta \smallsetminus \Delta_P|\cdot
\Biggl(
\gcd_{\beta\in \Delta \smallsetminus \Delta_P}
\Biggl(
\sum_{\alpha\in \Phi^+\smallsetminus \Phi_{\Delta_P}^+}
\langle \alpha,\beta^\vee\rangle
\Biggr)
-1
\Biggr)
\le
|\Phi^+\smallsetminus \Phi_{\Delta_P}^+|.
\]
\end{theorem}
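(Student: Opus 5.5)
The plan is to turn \eqref{strong inequality} into an injective count of roots in $\Phi^+\smallsetminus\Phi_{\Delta_P}^+$. Write $I=\Delta\smallsetminus\Delta_P$ and $J=\Delta_P$.

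\textbf{Step 1: rewrite each $n_\beta$.} Fix $\beta\in I$. By Lemma~\ref{root sum lemma}, $\sum_{\alpha\in\Phi^+}\langle\alpha,\beta^\vee\rangle=2$. Splitting $\Phi^+=\Phi_J^+\sqcup(\Phi^+\smallsetminus\Phi_J^+)$ gives
\[
n_\beta-1 \;=\; 1+\sum_{\alpha\in\Phi_J^+} m_{\alpha,\beta},\qquad m_{\alpha,\beta}:=-\langle\alpha,\beta^\vee\rangle .
\]
Each $m_{\alpha,\beta}$ is a non-negative integer. Indeed, $\alpha\in\Phi_J^+$ is a non-negative combination of simple roots $\alpha_j$ with $j\in J$. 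Each such $\alpha_j$ is distinct from $\beta$, so $\langle\alpha_j,\beta^\vee\rangle\le 0$. Hence \eqref{strong inequality} is equivalent to
\[
|I|+\sum_{\beta\in I}\sum_{\alpha\in\Phi_J^+} m_{\alpha,\beta}\;\le\;|\Phi^+\smallsetminus\Phi_J^+| .
\]

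\textbf{Step 2: construct roots with Lemma~\ref{lemma:string}.} For $\beta\in I$ and $\alpha\in\Phi_J^+$, we have $s_\beta(\alpha)=\alpha+m_{\alpha,\beta}\beta\in\Phi$. The $\beta$-string through $\alpha$ therefore contains $\alpha$ and $\alpha+m_{\alpha,\beta}\beta$. By Lemma~\ref{lemma:string} it contains every $\alpha+i\beta$ with $1\le i\le m_{\alpha,\beta}$. These roots are positive, since they have non-negative coefficients, and their $\beta$-coefficient is $i\ge1$, so they lie outside $\Phi_J^+$. To these we add the simple roots $\beta\in I$ themselves. This gives a family of $|I|+\sum_{\beta}\sum_\alpha m_{\alpha,\beta}$ elements of $\Phi^+\smallsetminus\Phi_J^+$.

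\textbf{Step 3: prove the family has no repetitions.} This is the main point to check, and it holds because each root in the family has support in $J\cup\{\beta\}$ with exactly one vertex from $I$, namely $\beta$.
\begin{itemize}
\item The root $\alpha+i\beta$ determines $\beta$ as its unique $I$-support vertex.
\item It determines $i$ as the coefficient of $\beta$.
\item It determines $\alpha$ as the remaining $J$-part.
\end{itemize}
The simple root $\beta$ has zero $J$-part. It therefore cannot coincide with any $\alpha+i\beta$, because $\alpha\neq0$. Injectivity yields the inequality of Step~1, and hence \eqref{strong inequality}.

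\textbf{Step 4: deduce the consequence.} Put $g=\gcd_{\beta\in I}(n_\beta)$. Step~1 shows $n_\beta\ge2>0$ for every $\beta\in I$, so $g\le n_\beta$ for all $\beta\in I$. Consequently
\[
|I|\,(g-1)\le\sum_{\beta\in I}(n_\beta-1)\le|\Phi^+\smallsetminus\Phi_J^+|.
\]

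In the spirit of the paper, the count in Step~2 can also be read through the Modified Kostant Game. The quantities $m_{\alpha,\beta}$ record the excess coefficients $K_\lambda$ that appear when the active vertex $\beta$ fires against a $J$-configuration, as in the proof of Theorem~\ref{thm:bijection}. However, the string-lemma argument above is self-contained and is the route I would write out.
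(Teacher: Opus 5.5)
Your proof is correct, and it reaches the main inequality through a different injection from the paper's.

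\textbf{Where the two arguments agree.} Both start the same way. Lemma~\ref{root sum lemma} rewrites $n_\beta-1$ as $1+\sum_{\alpha\in\Phi_J^+}m_{\alpha,\beta}$; the paper groups this sum by the components $\Gamma_{P_j}$ of $\Delta_P$ adjacent to $\beta$ and writes it as $1+\sum_j kh_j$. Both then produce, for each $\beta\in I$, that many positive (co)roots outside $\Phi_J$ whose support meets $I$ only in $\beta$. This makes the families for distinct $\beta$ disjoint.

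\textbf{Where they differ.} The paper plays the modified Kostant game on each $\Gamma_{P_j}$ with $\beta$ as source and concatenates the resulting strings. It fills the gaps with Lemma~\ref{lemma:string} to get one chain $\beta^\vee,\dots$ of consecutive heights, and reads off the top height $1+\sum_j kh_j$ from Theorem~\ref{thm:root_counting_identity}. Distinctness comes from the heights being distinct. You instead index the roots by pairs $(\alpha,i)$ with $\alpha\in\Phi_J^+$ and $1\le i\le m_{\alpha,\beta}$. You use only $s_\beta(\alpha)=\alpha+m_{\alpha,\beta}\beta$ and the unbroken $\beta$-string, and distinctness comes from reading $(\beta,\alpha,i)$ off the simple-root expansion.

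The two families genuinely differ. Take $A_3$ with $\Delta_P=\{\alpha_1,\alpha_3\}$ and $\beta=\alpha_2$, identifying roots and coroots since the type is simply laced. For one ordering of the components the paper's chain is $\alpha_2,\ \alpha_1+\alpha_2,\ \alpha_1+\alpha_2+\alpha_3$. Yours is $\alpha_2,\ \alpha_1+\alpha_2,\ \alpha_2+\alpha_3$.

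\textbf{What each route buys.} Yours is shorter and every step can be checked directly. It needs neither the game, nor Theorem~\ref{thm:bijection}, nor the component decomposition. It also avoids the points the paper treats only briefly: concatenating strings across components, completing them to consecutive heights, and invoking the root counting identity on a subdiagram with $\beta$ held fixed. The paper's route delivers what that section is after, namely the inequality as an output of the game. It also gives a more structured witness: a single chain of coroots with $\beta^\vee$-coefficient $1$ in every height from $1$ to $n_\beta-1$. Finally, you justify the gcd consequence explicitly via $n_\beta\ge 2$, which the paper leaves implicit.
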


\begin{proof}
Consider the subset $\tilde{\Phi}_{P}\subset (\Phi^\vee)^+$ consisting of all positive coroots of the form
\[
\beta^\vee+\sum_{\alpha_j\in \Delta_P} n_j\alpha_j^\vee,
\]
where $\beta\in \Delta \smallsetminus \Delta_P$ and $n_j\in\mathbb Z_{\ge 0}$.
Clearly,
\[
\tilde{\Phi}_{P}\subset (\Phi^\vee)^+\smallsetminus (\Phi_{\Delta_P}^\vee)^+.
\]
Thus, it is enough to prove that
\[
\sum_{\beta\in \Delta \smallsetminus \Delta_P}
\Biggl(
\sum_{\alpha\in \Phi^+\smallsetminus \Phi_{\Delta_P}^+}
\langle \alpha,\beta^\vee\rangle-1
\Biggr)
\le |\tilde{\Phi}_{P}|.
\]

Fix $\beta \in \Delta \smallsetminus \Delta_P$. Consider the Dynkin diagram associated with $\Delta_P$, and let $\Gamma_{\Delta_P}$ be the corresponding subgraph. Denote by
\[
\Gamma_{P_j} = \Gamma_{P_j}(\beta)
\]
the connected components of $\Gamma_{\Delta_P}$ that are adjacent to $\beta$. Note that there are at most three such components. Suppose that for a component $\Gamma_{P_j}$ there are $k$ arrows pointing from the simple root in $\Gamma_{P_j}$ adjacent to $\beta$ towards $\beta$.

For each component $\Gamma_{P_j}$, consider a string of coroots
\[
\beta^\vee,\,
\beta^\vee+\gamma_1^j,\,
\ldots,\,
\beta^\vee+\gamma_{l_j}^j
\]
obtained by playing the modified Kostant game on $\Gamma_{P_j}\cup\{\beta\}$ with active set $I=\{\beta\}$ and with the arrow multiplicities dictated by the Dynkin diagram, until termination. Each $\gamma_i^j$ is a positive linear combination of coroots in $(\Phi_{P_j}^\vee)^+$.
If $\beta$ is not adjacent to any vertex of $\Delta_P$, we simply take the string consisting only of $\beta^\vee$.

The mechanics of the Kostant game imply that these strings can be concatenated together. For instance, the following is a string of coroots:
\[
\beta^\vee,\,
\beta^\vee+\gamma_1^1,\,
\ldots,\,
\beta^\vee+\gamma_{l_1}^1,\,
\beta^\vee+\gamma_{l_1}^1+\gamma_1^2,\,
\ldots,\,
\beta^\vee+\sum_j\gamma_{l_j}^j.
\]
By Lemma~\ref{lemma:string}, this string can be completed so that the resulting coroots have consecutive heights. Hence its total length is
\[
1+\sum_j l_j
=1+\sum_j l_j(\beta).
\]
Moreover, every root corresponding to a coroot in this string belongs to $\tilde{\Phi}_{P}$.

It remains to identify this quantity with
\[
\sum_{\alpha\in \Phi^+\smallsetminus \Phi_{\Delta_P}^+}
\langle \alpha,\beta^\vee\rangle-1.
\]
By Lemma~\ref{root sum lemma},
\begin{align*}
\sum_{\alpha\in \Phi^+\smallsetminus \Phi_{\Delta_P}^+}
\langle \alpha,\beta^\vee\rangle-1
&=
1-\sum_{\alpha\in \Phi_{\Delta_P}^+}\langle \alpha,\beta^\vee\rangle \\
&=
1-\sum_j
\sum_{\alpha\in \Phi_{P_j}^+}\langle \alpha,\beta^\vee\rangle.
\end{align*}

For each component $\Gamma_{P_j}$, write
\[
\sum_{\alpha\in \Phi_{P_j}^+}\alpha
=
h_j\alpha_j+\cdots,
\]
where $\alpha_j$ is the simple root in $\Gamma_{P_j}$ adjacent to $\beta$.
Since $\beta^\vee$ is orthogonal to all simple roots in $\Delta_P$ except $\alpha_j$, we obtain
\[
\sum_{\alpha\in \Phi_{P_j}^+}\langle \alpha,\beta^\vee\rangle
=
h_j\langle \alpha_j,\beta^\vee\rangle
=
-kh_j.
\]
On the other hand, by the root counting identity from Theorem~\ref{thm:root_counting_identity} applied with $I=\{\beta\}$, the height of the final coroot of the game on $\Gamma_{P_j}\cup\{\beta\}$ is
\[
\operatorname{ht}(\beta^\vee+\gamma_{l_j}^j)=1+kh_j.
\]
Therefore,
\[
-\sum_{\alpha\in \Phi_{P_j}^+}\langle \alpha,\beta^\vee\rangle
=
\operatorname{ht}(\beta^\vee+\gamma_{l_j}^j)-1.
\]
Substituting this into the previous expression gives
\begin{align*}
\sum_{\alpha\in \Phi^+\smallsetminus \Phi_{\Delta_P}^+}
\langle \alpha,\beta^\vee\rangle-1
&=
1+\sum_j
\bigl(\operatorname{ht}(\beta^\vee+\gamma_{l_j}^j)-1\bigr) \\
&=
\operatorname{ht}\Bigl(\beta^\vee+\sum_j\gamma_{l_j}^j\Bigr) \\
&=
1+\sum_j l_j(\beta).
\end{align*}

Summing over all $\beta\in \Delta \smallsetminus \Delta_P$, we conclude that
\[
\sum_{\beta\in \Delta \smallsetminus \Delta_P}
\Biggl(
\sum_{\alpha\in \Phi^+\smallsetminus \Phi_{\Delta_P}^+}
\langle \alpha,\beta^\vee\rangle-1
\Biggr)
=
\sum_{\beta\in \Delta \smallsetminus \Delta_P}
\bigl(1+\sum_j l_j(\beta)\bigr)
\le |\tilde{\Phi}_{P}|,
\]
which proves the theorem.
\end{proof}

\begin{remark}
The inequality proved in the previous theorem was already established by Pasquier in \cite[Lemma~4.8]{pasquier}. His proof relies on a case-by-case analysis depending on the type of the Dynkin diagram and the corresponding parabolic subgroup, verified through explicit calculations. In contrast, the proof presented here is based on the combinatorics of the Kostant game.
\end{remark}


\section{Computational Implementation}
\label{sec:implementation}

To facilitate the exploration of the game dynamics on arbitrary graphs, verify the theoretical results presented in this paper, and generate the corresponding combinatorial structures, an interactive html application has been developed. 

The software allows users to simulate both the classical and the Modified Kostant Game directly in their web browsers without the need for additional installations, supporting simply-laced and multiply-laced diagrams via directed weighted edges. It provides real-time visualization of the node states and tracks the generated words. The playable interactive simulation is publicly available at the following website:
\begin{center}
    \url{https://sites.google.com/unal.edu.co/juascortescru/kostant-game}
\end{center}

\noindent Furthermore, a dedicated interactive module demonstrating the explicit bijection between the Modified Kostant Game on type $A_n$ and the step-by-step construction of Standard Young Tableaux is available at:
\begin{center}
    \url{https://sites.google.com/unal.edu.co/juascortescru/young-tableux}
\end{center}


\section{Conclusions}
\label{sec:conclusions}

This work demonstrates that the multi-vertex Modified Kostant Game is a robust and highly versatile combinatorial framework. By rigorously establishing the bijection between game configurations and the parabolic quotients of Weyl groups $W/W_J$, we have provided a dynamic, algorithmic tool to explore structures that are traditionally handled through abstract algebra.

The implications of this bijection extend far beyond theoretical representation theory. As shown, the game provides an elegant method for root counting, a constructive proof for the regularity of reduced word languages via finite state automata, and a dynamic engine for generating Standard Young Tableaux. Furthermore, its original application in understanding the factorizations of Hilbert polynomials for the Mukai conjecture highlights its deep connection to algebraic geometry. The accompanying computational implementation ensures that this theoretical framework can be actively utilized and expanded in future research.



\begin{thebibliography}{10}

\bibitem{BjornerBrenti2005}
Björner, A., \& Brenti, F. (2005). \textit{Combinatorics of Coxeter Groups}. Graduate Texts in Mathematics, vol. 231. Springer, Berlin, Heidelberg.

\bibitem{CaviedesCastro2022}
Caviedes Castro, A., Pabiniak, M., \& Sabatini, S. (2023). Generalizing the Mukai Conjecture to the symplectic category and the Kostant game. Pure and Applied Mathematics Quarterly \textbf{19} (2023), no. 4, 1803--1837.

\bibitem{casagrande2006}
C.~Casagrande, \emph{The number of vertices of a Fano polytope}, 
Annales de l'Institut Fourier \textbf{56} (2006), no.~1, 121--130.

\bibitem{chen2017}
Chen, E. (2017). \textit{Topics in Combinatorics; Lecture Notes (Taught by Alexander Postnikov)}. MIT Lecture notes.

\bibitem{HopcroftUllman1979}
J.~E.~Hopcroft \& J.~D.~Ullman,
\emph{Introduction to Automata Theory, Languages, and Computation},
Addison-Wesley, 1979.

\bibitem{Humphreys1972}
Humphreys, J. E. (1972). \textit{Introduction to Lie Algebras and Representation Theory}. Graduate Texts in Mathematics, vol. 9. Springer-Verlag, New York.

\bibitem{mukai}
S. Mukai,
\emph{Problems on characterization of the complex projective space},
Birational Geometry of Algebraic Varieties, Open Problems, Proceedings of the 23rd Symposium of the Taniguchi Foundation at Katata, Japan (1988), 57--60.

\bibitem{pasquier}
\newblock B. Pasquier, \
\newblock \emph{Vari\'et\'es horosph\'eriques de Fano,}
\newblock Bull. Soc. Math. France, {\bf 136}, (2008), no. 2,
195--225\,.

\bibitem{Reineke2024}
M. Reineke,
\textit{The Mukai Conjecture for Fano Quiver Moduli},
Algebras and Representation Theory,
Vol. 27, pp. 1641--1644, 2024. Doi 10.1007/s10468-024-10268-8.

\bibitem{Winkel1996}
R. Winkel,
\textit{A combinatorial bijection between standard Young tableaux and reduced words of Grassmannian permutations},
Séminaire Lotharingien de Combinatoire \textbf{36} (1996), Article B36h.

\end{thebibliography}
\end{document}